\newtheorem{theo}{Theorem}[section]
\newtheorem{lem}{Lemma}[section]
\newtheorem{prop}{Proposition}[section]
\newtheorem{cor}{Corollary}[section]
\newtheorem{rem}{Remark}[section]
\theoremstyle {remark}
\begin{document}
\title{Two minimal unique ergodic diffeomorphisms on a manifolds and their smooth crossed product algebras }
\author{Hongzhi Liu\\
Email: 1063733099@qq.com}

\thanks{Supported by NNSF of China (11201171).}

\maketitle 
\pagestyle{plain}
\begin{abstract}

In this article we construct two minimal unique ergodic diffeomorphisms $\alpha$ and $\beta$ on $S^3 \times S^{6} \times S^{8} $. We will show that $C(S^3 \times S^{6} \times S^{8}) \rtimes_\alpha \mathbb{Z} $ and $C(S^3 \times S^{6} \times S^{8})\rtimes_\beta \mathbb{Z} $ are equivalent to each other, while $C^\infty (S^3 \times S^{6} \times S^{8})\rtimes_\alpha \mathbb{Z} $ and $C^\infty(S^3 \times S^{6} \times S^{8} )\rtimes_\beta \mathbb{Z} $ are not.

\textit{Keyword}: smooth crossed products, cyclic cohomology.

\textit{MSC}: 46L80, 46L87.

\end{abstract}

\section{introduction}

Isomorphism between two irrational rotation algebras implies flip equivalence between their corresponding irrational rotation transformations (\cite{pv}, \cite{rie}). Giordano, Putnam and Skau have given a classification of dynamical systems on Cantor set based on $C^*$-crossed product algebras (\cite{gps}). 

However, different dynamical systems may give equivalent $C^*$-crossed product algebras (see \cite{chis}, \cite{ch1} for examples). It is interesting to investigate smooth crossed product algebra in these cases. Let $g$ and $h$ be minimal unique ergodic diffeomorphisms of $S^{2m+1}$ and $S^{2n+1}$ respectively, with $m,n\geq 1$. In \cite{ch5}, N. C. Phillips proved that $C(S^{2m+1})\rtimes_g \mathbb{Z} \cong C(S^{2n+1})\rtimes_h \mathbb{Z}$. In \cite{liu}, the author proved that $C^\infty(S^{2m+1})\rtimes_g \mathbb{Z} \ncong  C^\infty(S^{2n+1})\rtimes_h \mathbb{Z}$ by checking the grading structure of cyclic cohomology. People may argue that this is too obvious since these two diffeomorphisms are of different manifolds. In this article we construct two diffeomorphisms of a same manifold giving same $C^*$-crossed product algebra and different smooth crossed product algebras. 

We introduce several notions and theories important for our construction in the next section. In the third section we give the construction (Theorem \ref{cons}) of minimal unique ergodic diffeomorphisms $\alpha$ and $\beta$ of $C(S^3 \times S^{6} \times S^{8})$. The fourth and fifth section are devoted to computation of $K$-theory and cyclic cohomology. These computation together show that 
\begin{eqnarray*}
C(S^3 \times S^{6} \times S^{8})\rtimes_{\alpha} \mathbb{Z} &\cong &C(S^3 \times S^{6} \times S^{8})\rtimes_{\beta} \mathbb{Z},\\
C^\infty(S^3 \times S^{6} \times S^{8})\rtimes_{\alpha} \mathbb{Z} &\ncong & C^\infty(S^3 \times S^{6} \times S^{8})\rtimes_{\beta} \mathbb{Z}.
\end{eqnarray*}
In the last section we prove Theorem \ref{cons}. 

\section{Preliminary}

Let $M$ be a finite dimensional compact manifold. Choose finitely many vector fields $X_1$, $X_2$, $\dots$, $X_n$ on $M$ which can span the tangent space at any point ($n$ is not necessarily equal to the dimension of $M$). Define seminorms $\| \bullet \|_n$:
 \[\|f\|_n=\sum_{1\leq k_1\leq \dots \leq k_n\leq n}\|X_{k_n}X_{k_{n-1}}\dots X_{k_1}f\|_\infty, n\in \mathbb{Z}_+\cup \{0\}, f\in C^\infty (M).\] 
Let $\alpha$ be a minimal unique ergodic diffeomorphism of $M$. Let $C^\infty (M)_\alpha[u,u^{-1}]$ be the algebraic crossed product of $C^\infty (M)$ by $\mathbb{Z}$. Let $\|\alpha^t\|_i$ be the operator seminorms defined by $\|\bullet\|_i$ on $C^\infty (M)$, i.e.  
\[\|\alpha^t\|_i \triangleq \sup_{f\in C^\infty(M), \|f\|_i =1} \|\alpha^t(f)\|_i.\]
These seminorms defines topology $\mathcal{T}$ on $C^\infty (M)$.

Define a sequence of maps 
\[\rho_k:\mathbb{Z} \to  \mathbb{R}^+, k=1,2,\dots ,\]
by 
$\rho_k(n)  = \nolinebreak \sup_{i\leqslant k}(\sum_{t=-n}^n\|\alpha^t\|_i)^k.$

Endow $C^\infty (M)_\alpha[u,u^{-1}]$ with the topology $\mathcal{T}$ defined by the following  seminorms
\[ \|\sum_n f_n u^n\|_k=\sup_n \rho_k(n)\|f_n\|_k, f_n \in C^\infty (M).\]
This topology does not depend on the choice of $X_1$, $X_2$,  $\dots$, $X_n$. Then the completion of $C^\infty (M)_\alpha[u,u^{-1}]$ is the smooth crossed product algebra $C^\infty(M)\rtimes_\alpha \mathbb{Z}$. This is a locally convex topology  algebra.

Alain Connes defines cyclic cohomology in \cite{Al1}. For locally convex topology  algebra $\mathcal{A}$, let $HC^i(\mathcal{A})$ be the cyclic cohomology. There are two inductive limits 
\[ HC^0(\mathbb{C})\dots \overset{S}\to HC^{2n}(\mathbb{C}) \overset{S}\to  HC^{2n+2}(\mathbb{C})\overset{S}\to \dots,\]
\[ HC^1(\mathbb{C})\dots \overset{S}\to HC^{2n+1}(\mathbb{C}) \overset{S}\to  HC^{2n+3}(\mathbb{C})\overset{S}\to \dots.\]
The limit groups are the so called periodic cyclic cohomology
\[HP^i (\mathcal{A}) \triangleq \lim_\to HC^{2n+i}(\mathcal{A}),i=0,1.\]
Let $S(HC^*(\mathcal{A}))\subset HP^*(\mathcal{A})$ be the image of $HC^*(\mathcal{A})$ in $HP^*(\mathcal{A})$. Groups $S(HC^n(\mathcal{A}))/S(HC^{n-2}(\mathcal{A}))$ are the grading structure of cyclic cohomology of $\mathcal{A}$. 

Grading groups can be computed as $E_\infty^n(\mathcal{A})$ (see \cite{Al1} and \cite{nest}, or \cite{liu} for details). These groups are also called as deRham homology of $\mathcal{A}$ \cite{nestt}.

Let us recall some facts about minimal and unique ergodic diffeomorphism. 
 
\begin{prop}[\cite{fathi}]\label{mind}
Let $M$ be a compact manifold, $G$ be a group acting on $M$, then the following two conditions are equivalent:\\
$\mathnormal{1)} $ G action is minimal. \\
$\mathnormal{2)} $ There is a finite subsets $\{g_1, \dots , g_n \} \subset G$, such that for any non-empty open sets $U$ in $M$, there is $M= \cup_{i=1}^n g_i(U) $.
\end{prop}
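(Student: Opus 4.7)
The plan is to prove the two implications separately, using density of orbits in one direction and compactness of $M$ in the other. I read condition (2) as ``for every non-empty open $U$ there exists a finite subset $\{g_1,\dots,g_n\}\subset G$, depending on $U$, with $M=\bigcup_{i=1}^n g_i(U)$''; the alternative quantifier order with a single finite set working for every $U$ is already false for an irrational rotation of $S^1$, because translating a sufficiently small interval by any fixed finite collection of group elements cannot cover the whole circle for measure reasons.

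For the implication $(1)\Rightarrow(2)$, I would fix a non-empty open $U\subset M$. Minimality says every orbit is dense, so for each $x\in M$ there is some $g\in G$ with $g\cdot x\in U$, equivalently $x\in g^{-1}(U)$. Hence $\{g(U):g\in G\}$ is an open cover of $M$, each $g(U)$ being open because $g$ is a homeomorphism. Compactness of $M$ then extracts a finite subcover $\{g_1(U),\dots,g_n(U)\}$ with $M=\bigcup_{i=1}^n g_i(U)$, which is exactly condition (2).

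For the implication $(2)\Rightarrow(1)$, I would show that every orbit is dense. Fix $x\in M$ and an arbitrary non-empty open $V\subset M$; it suffices to produce some $g\in G$ with $g\cdot x\in V$. Applying (2) with $U=V$ yields a finite set $\{g_1,\dots,g_n\}\subset G$ (depending on $V$) with $M=\bigcup_{i=1}^n g_i(V)$, so $x\in g_i(V)$ for some $i$, and then $g_i^{-1}\cdot x\in V$. Since $x$ and $V$ were arbitrary, every orbit is dense, i.e.\ the action is minimal.

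The main obstacle is really only the quantifier-reading issue flagged in the first paragraph; once that interpretation is fixed, both directions reduce to essentially tautological manipulations that combine the definition of minimality with the standard open-cover/compactness argument, and no further subtleties are expected. The compactness of $M$ is used only in the $(1)\Rightarrow(2)$ direction, and continuity of the $G$-action enters only through the fact that each $g$ is a homeomorphism so that each $g(U)$ is open.
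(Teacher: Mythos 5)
Your argument is correct and is the standard proof of this equivalence (orbit density plus compactness for one direction, pulling $x$ back through the finite cover for the other); the paper itself gives no proof, citing Fathi--Herman, so there is nothing to compare against beyond noting that your route is the expected one. Your observation about the quantifier order is also right: as literally printed, ``there is a finite subset such that for any non-empty open $U$'' is false (a fixed finite set of rotations cannot cover $S^1$ by images of an arbitrarily small arc), and the intended reading is the one you adopt, with the finite set depending on $U$.
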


\begin{prop}[\cite{fur}]\label{ergd}
 Let $M$ be a compact manifold, $f$ be a diffeomorphism on $M$. Then the followings are equivalent: \\
$\mathnormal{1)} $ $f$ is unique ergodic. \\
$\mathnormal{2)} $ For any $\phi \in C(M)$, $\frac{1}{n} \sum^{n-1}_{i=0} \phi \circ f^i$ converge uniformly.
\end{prop}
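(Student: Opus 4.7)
The plan is to establish both implications via the Riesz representation theorem, which identifies the dual of $C(M)$ with signed Borel measures on $M$, together with the weak-$*$ compactness of the set of probability measures on $M$ given by the Banach--Alaoglu theorem.

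For $2) \Rightarrow 1)$, write $A_n\phi \triangleq \frac{1}{n}\sum_{i=0}^{n-1} \phi \circ f^i$ and suppose that for every $\phi \in C(M)$ the sequence $A_n\phi$ converges uniformly to a constant $L(\phi)$. For any $f$-invariant probability measure $\mu$, invariance gives $\int \phi \, d\mu = \int A_n\phi \, d\mu$; passing to the uniform limit yields $\int \phi \, d\mu = L(\phi)$, a quantity independent of $\mu$. Hence all $f$-invariant probability measures coincide on $C(M)$, and therefore as Borel measures by Riesz representation. Since Krylov--Bogolyubov guarantees at least one invariant measure, $f$ is uniquely ergodic.

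For $1) \Rightarrow 2)$, let $\mu$ denote the unique $f$-invariant probability measure and argue by contradiction: suppose there exist $\phi \in C(M)$, $\varepsilon > 0$ and sequences $n_k \to \infty$, $x_k \in M$ with $|A_{n_k}\phi(x_k) - \int \phi \, d\mu| \geq \varepsilon$. Introduce the empirical measures $\nu_k \triangleq \frac{1}{n_k}\sum_{i=0}^{n_k - 1} \delta_{f^i x_k}$, which form a sequence in the weak-$*$ compact space of probability measures; pass to a subsequential limit $\nu$. The telescoping estimate $|\int \psi \circ f \, d\nu_k - \int \psi \, d\nu_k| \leq 2\|\psi\|_\infty / n_k$ for each $\psi \in C(M)$ shows that $\nu$ is $f$-invariant, hence $\nu = \mu$ by unique ergodicity. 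Then $\int \phi \, d\nu_k = A_{n_k}\phi(x_k)$ converges to $\int \phi \, d\mu$, contradicting the lower bound on the left-hand side.

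The only delicate step is the invariance of the weak-$*$ limit of the empirical measures, which is the classical Krylov--Bogolyubov argument and relies solely on the continuity of $f$ to pass the pushforward through the weak-$*$ limit. The remaining ingredients are standard packaging of Riesz representation and weak-$*$ compactness of probability measures on the compact space $M$.
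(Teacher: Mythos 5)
The paper does not prove this proposition at all --- it is quoted from Furstenberg's paper with a citation and used as a black box --- so there is no in-paper argument to compare against; your proof is the standard textbook one (Krylov--Bogolyubov empirical measures plus weak-$*$ compactness) and both directions are carried out correctly. One point deserves attention: as literally stated, condition $2)$ only asks that the averages $A_n\phi$ converge uniformly, without requiring the limit to be constant, and in that form the implication $2)\Rightarrow 1)$ is false (for the identity map on any manifold with more than one point, $A_n\phi=\phi$ converges uniformly, yet every Dirac measure is invariant). You silently strengthened the hypothesis to ``converges uniformly to a constant $L(\phi)$,'' which is the correct form of Furstenberg's theorem and is also how the paper actually uses the proposition (the sets $\iota(\phi,\epsilon)$ are defined by closeness of the averages to a constant $r$). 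It would be worth stating that correction explicitly rather than absorbing it into the hypothesis without comment; with that caveat, your argument is complete.
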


Let $m$ be the set of minimal diffeomorphisms on a compact manifold $M$, $u_e$ be the set of unique ergodic on $M$. Let $D(M)$ be the set of all the diffeomorphisms on $M$. Endow this set with the topology $\mathcal{T}$. 

For any open set $U$, $\phi \in C(M)$, $\epsilon > 0 $, define open sets $\iota (U) \in D(M)$ and $\iota (\phi, \epsilon )\in D(M)$ as 
\[\iota(U)=\{f\in D(M) | \exists k\in \mathbb{N}, s.t. U\cup \dots f^k(U)=M \},\]
\[\iota (\phi, \epsilon ) =\{ f \in D(M) | \exists n\in \mathbb{N},  r \in \mathbb{R}, s.t. \|\frac{1}{n} \sum^{n-1}_{i=0} \phi\circ f^i - r \|< \epsilon \}.\]
Let $(U_i)_{i\in \mathbb{N}}$ be a topological base of topology $\mathcal{T}$. $\{\phi_l\}_{l\in\mathbb{N}}$ are dense in $C(M)$. By Propositions \ref{mind} and \ref{ergd}, we have 
\[m= \cap_i \iota(U_i), \]
\[u_e = \cap_l \cap_{k\geq1} \iota(\phi_l, 1/k ),\]
where $k\in \mathbb{N}$.

\section{The construction}

Recall that $S^1=\{\|a_1\|=1 | a_1\in \mathbb{C}\}$, $S^3=\{\|a_2\|+\|a_3\|=1 | a_2, a_3\in \mathbb{C}\}$. There exist a free $S^1$ actions on $S^3$ defined by
\[\mathcal{R}(a_1,a_2,a_3 ) = (a_1a_2, a_1a_3)\] 
is a free $S^1$ action. On manifold $S^3 \times S^{6} \times S^{8}$, we can also define a free $S^1$ action as $R\otimes Id \otimes Id$, which will be denoted by $R$ in the following.

Let $\mathcal{R}_t(x)$ be $\mathcal{R}(t, x)$, for any $t \in S^1$ and $x \in S^3 \times S^{6} \times S^{8}$. Let $\mathcal{P}_6$  and $\mathcal{P}_8$ be the antipode map on $S^6$ and $S^8$ respectively. Consider two sets of diffeomorphisms of $S^3 \times S^{6} \times S^{8}$
\[\Omega(\mathcal{R}, \mathcal{P}_6)=\{g\circ \mathcal{R}_t \circ \mathcal{P}_6 \circ g^{-1}  | t \in S^1, g \in D(S^3 \times S^{6} \times S^{8})\},\]
\[\Omega(\mathcal{R}, \mathcal{P}_8)=\{g\circ \mathcal{R}_t \circ \mathcal{P}_8 \circ g^{-1}  | t \in S^1, g \in D(S^3 \times S^{6} \times S^{8})\}.\]

We claim

\begin{theo} \label{cons}
There are minimal unique ergodic diffeomorphisms in both $\overline{\Omega(\mathcal{R}, \mathcal{P}_6)}$ and $\overline{\Omega(\mathcal{R}, \mathcal{P}_8)}$. 
\end{theo}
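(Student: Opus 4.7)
The plan is to establish Theorem \ref{cons} by a Baire category argument applied to the closed set $\overline{\Omega(\mathcal{R}, \mathcal{P}_6)}$; the argument for $\overline{\Omega(\mathcal{R}, \mathcal{P}_8)}$ will be identical, replacing $\mathcal{P}_6$ by $\mathcal{P}_8$ throughout. By the characterizations given at the end of Section 2,
\[ m \cap u_e \;=\; \bigcap_{i} \iota(U_i) \;\cap\; \bigcap_{l,k}\iota(\phi_l, 1/k), \]
a countable intersection of subsets that are open in $D(M)$ by definition. Since $D(M)$ with the topology $\mathcal{T}$ is Polish, any closed subset is a Baire space; in particular $\overline{\Omega(\mathcal{R},\mathcal{P}_6)}$ is Baire. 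It therefore suffices to show that for every index $i$, $l$, $k$ the sets $\iota(U_i)\cap\overline{\Omega(\mathcal{R},\mathcal{P}_6)}$ and $\iota(\phi_l,1/k)\cap\overline{\Omega(\mathcal{R},\mathcal{P}_6)}$ are dense in $\overline{\Omega(\mathcal{R},\mathcal{P}_6)}$; the Baire theorem then furnishes a point in the intersection, i.e.\ a minimal uniquely ergodic diffeomorphism.

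The heart of the proof is the density step, and this is where the specific geometry of $\mathcal{R}_t\mathcal{P}_6$ is used. A direct computation gives $(\mathcal{R}_t\mathcal{P}_6)^n(x,y,z) = (\mathcal{R}_{nt}x,\,(-1)^n y,\,z)$, so every orbit of an unconjugated element $\mathcal{R}_t\mathcal{P}_6$ is confined to a one-dimensional slice of the form $\overline{\{\mathcal{R}_s x\}}\times\{\pm y\}\times\{z\}$; no untwisted element of $\Omega$ is close to being minimal, so the conjugating diffeomorphism must genuinely redistribute the slices across $M$. The idea, following Fathi \cite{fathi} and the Anosov--Katok approximation-by-conjugation scheme, is as follows. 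Given $f_0 = g_0\mathcal{R}_{t_0}\mathcal{P}_6 g_0^{-1}$ and a neighborhood $V$ of $f_0$, approximate $t_0$ by a rational $t_1 = p/q$; then $(\mathcal{R}_{t_1}\mathcal{P}_6)^{2q}=\mathrm{Id}$, so the map has a clean periodic orbit partition of $M$. One chooses a diffeomorphism $h$ which reshuffles these $2q$ slices across $M$ — for $\iota(U_i)$ so that the first $2q$ iterates of $h\mathcal{R}_{t_1}\mathcal{P}_6 h^{-1}$ applied to $U_i$ cover $M$, and for $\iota(\phi_l,1/k)$ so that the $2q$ points of a typical orbit are approximately equidistributed and make the Birkhoff average of $\phi_l$ fall within $1/k$ of a constant. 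Setting $f=g_0 h\mathcal{R}_{t_1}\mathcal{P}_6 h^{-1}g_0^{-1}\in\Omega$ yields the required element of $\iota(U_i)\cap V$ (respectively $\iota(\phi_l,1/k)\cap V$).

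The main obstacle is the delicate tension in this step: the conjugator $h$ must be large enough to break the slice-preserving structure of $\mathcal{R}_{t_1}\mathcal{P}_6$, yet the composite $h\mathcal{R}_{t_1}\mathcal{P}_6 h^{-1}$ must remain $\mathcal{T}$-close to $\mathcal{R}_{t_0}\mathcal{P}_6$ so that the resulting $f$ lies in $V$. Both requirements can be met by exploiting the long period $2q$: choosing $p/q$ very close to $t_0$ allows $q$ to be taken arbitrarily large, which simultaneously gives enough iterates for $h$-transported copies of $U_i$ to exhaust $M$ and keeps the short-time behaviour of $\mathcal{R}_{p/q}\mathcal{P}_6$ close to that of $\mathcal{R}_{t_0}\mathcal{P}_6$. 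The antipodes $\mathcal{P}_6$ and $\mathcal{P}_8$ enter only through their commutation with $\mathcal{R}$ and the relation $\mathcal{P}_k^2=\mathrm{Id}$, so the same construction applies verbatim in $\overline{\Omega(\mathcal{R},\mathcal{P}_8)}$; controlling the $C^\infty$ estimates for the conjugators across this induction is the most technical part of the argument.
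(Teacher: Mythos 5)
Your global architecture agrees with the paper's: write $m\cap u_e$ as a countable intersection of open sets $\iota(U_i)$ and $\iota(\phi_l,1/k)$, observe that $\overline{\Omega(\mathcal{R},\mathcal{P}_6)}$ is a Baire space, and reduce everything to showing that each of these open sets is dense in $\overline{\Omega(\mathcal{R},\mathcal{P}_6)}$ by working near the rational-parameter elements $\mathcal{R}_{p/q}\circ\mathcal{P}_6$. Your computation that the even iterates of $\mathcal{R}_t\mathcal{P}_6$ sweep the $S^1$-orbit of $x$ while the odd iterates sweep that of $\mathcal{P}_6(x)$ is also exactly the observation the paper uses. So far, so good.

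The gap is in the mechanism you propose for resolving what you correctly identify as the ``delicate tension'': you claim that taking $p/q$ close to $t_0$, hence $q$ large, keeps $h\mathcal{R}_{p/q}\mathcal{P}_6 h^{-1}$ in the prescribed neighborhood $V$ of $\mathcal{R}_{t_0}\mathcal{P}_6$. This is false as stated: for a fixed wild conjugator $h$, the map $h\mathcal{R}_{p/q}\mathcal{P}_6 h^{-1}$ can be far from $\mathcal{R}_{p/q}\mathcal{P}_6$ in the $\mathcal{T}$ topology no matter how large $q$ is, since the distortion introduced by $h$ has nothing to do with the period. The ingredient that actually closes this gap --- and the heart of the paper's Lemmas \ref{lmin} and \ref{lerg} --- is an \emph{equivariance} requirement on the conjugator: one demands $H\circ\mathcal{R}_{p/q}\circ H^{-1}=\mathcal{R}_{p/q}$ and $H\circ\mathcal{P}_6\circ H^{-1}=\mathcal{P}_6$, which forces $H\mathcal{R}_{p/q}\mathcal{P}_6H^{-1}=\mathcal{R}_{p/q}\mathcal{P}_6$ \emph{exactly}, so there is nothing to control at the rational parameter; the elements of $\iota(U_i)$ and $\iota(\phi_l,1/k)$ accumulating at $\mathcal{R}_{p/q}\mathcal{P}_6$ are then obtained by perturbing the rotation number to a nearby irrational $\theta$ while keeping $H$ fixed. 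Such an $H$ is produced by passing to the quotient of $S^3\times S^6\times S^8$ by the finite group generated by $\mathcal{R}_{p/q}$ and $\mathcal{P}_6$ (a manifold, since both act freely --- this is where the antipode on an even sphere, rather than an arbitrary involution, is used), applying the Fathi--Herman propositions to the induced free $S^1$-action on $S^3/G\times\mathbb{RP}^6\times S^8$, and lifting. Passing to this quotient also handles, in one stroke, the fact that the relevant orbit closure is the union of the two $S^1$-orbits of $y$ and $\mathcal{P}_6(y)$, so the pulled-back open set need only cut one of them. Without the commutation conditions your density step does not produce an element of $V$, and without the quotient construction you have no concrete source for the ``reshuffling'' diffeomorphism $h$; citing the Anosov--Katok scheme in general terms does not substitute for these two points.
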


We will postpone our proof to the last section and show what this theorem implies first. In the following, $\alpha$ and $\beta$ are  minimal unique ergodic diffeomorphisms in $\overline{\Omega(\mathcal{R}, \mathcal{P}_6)}$ and $\overline{\Omega(\mathcal{R}, \mathcal{P}_8)}$ respectively. 

\section{ Classify $C^*$-algebras }

Recall the classification theory in \cite{win}

\begin{theo}[\cite{win}]\label{ccla}
Let $\mathcal{C}$ be the class of $C^*$-algebras having the following properties:

$(\mathnormal{1}) $ every $\mathit{A} \in \mathcal{C} $ has the form $C(M)\rtimes_{\gamma} \mathbb{Z} $ for some infinite, compact, finite dimensional, metrizable space and minimal homeomorphism $\gamma : M\to M$.

$(\mathnormal{2}) $	the projections of every $\mathit{A} \in \mathcal{C}$ separate traces.

If $\mathit{A,B}\in \mathcal{C}$ and there is a graded ordered isomorphism $\phi : K_*(\mathit{A})\to K_*(\mathit{B})$, then there is a $*$-isomorphism $\Phi:\mathit{A}\to \mathit{B}$ which induces $\phi $.
\end{theo}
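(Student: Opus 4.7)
The plan is to invoke the Elliott classification program for simple nuclear C*-algebras, specifically the classification theorem (as formulated in \cite{win}) for simple, separable, unital, nuclear, $\mathcal{Z}$-stable C*-algebras satisfying the UCT, together with a reduction showing that hypothesis $(2)$ collapses the full Elliott invariant down to the graded ordered $K$-theory with unit. The argument splits into verifying the structural hypotheses for each $A \in \mathcal{C}$ and carrying out the reduction.

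For the structural side, let $A = C(M) \rtimes_\gamma \mathbb{Z}$ with $M$ as in $(1)$ and $\gamma$ minimal. Simplicity follows from minimality together with the absence of finite orbits on the infinite space $M$; separability follows from metrizability of $M$; unitality and nuclearity (the latter because $\mathbb{Z}$ is amenable) are immediate. The UCT holds because $C(M)$ lies in the bootstrap class and that class is closed under crossed products by $\mathbb{Z}$. The Krylov--Bogolyubov theorem supplies $\gamma$-invariant Borel probability measures on $M$, hence traces on $A$, and $A$ is therefore stably finite. The deepest input is finite nuclear dimension (and hence $\mathcal{Z}$-stability) of such crossed products for minimal homeomorphisms of finite-dimensional metrizable spaces; this has been established in the sustained work of Toms, Winter, Hirshberg--Winter--Zacharias, Szabo, Elliott--Niu and others, and it is the step I would identify as the main technical obstacle, which for the statement at hand is simply imported as a black box.

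With these properties in place, the Elliott invariant of $A$ is the tuple $(K_0(A), K_0(A)^+, [1_A], K_1(A), T(A), r_A)$, where $r_A : T(A) \to S(K_0(A))$ is the pairing with traces. Hypothesis $(2)$ is precisely the assertion that $r_A$ is injective, so $T(A)$ embeds as a compact convex subset of the state space of the pointed ordered group $(K_0(A), K_0(A)^+, [1_A])$; moreover, under the structural conditions above these crossed products have real rank zero, so projections generate $K_0(A)$ and the image of $r_A$ is canonically reconstructible from the ordered $K$-theory with unit. Consequently, any graded ordered isomorphism $\phi : K_*(A) \to K_*(B)$ extends uniquely to an isomorphism of the full Elliott invariants, and the classification theorem then produces a $*$-isomorphism $\Phi : A \to B$ that induces $\phi$, completing the proof.
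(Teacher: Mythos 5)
The paper does not actually prove this statement: it is quoted verbatim from Toms--Winter \cite{win} and used as a black box, so there is no internal proof to compare against. Your sketch is therefore an attempt to reconstruct the external argument, and while its overall architecture (verify simplicity, nuclearity, UCT, $\mathcal{Z}$-stability, then reduce the invariant) points in the right direction, it contains a genuine error at the reduction step.

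The problem is your claim that ``under the structural conditions above these crossed products have real rank zero, so projections generate $K_0(A)$ and the image of $r_A$ is canonically reconstructible.'' This is false in general, and it fails precisely for the algebras this paper cares about: by Theorem \ref{zran} (Connes), when $H^1(M)=0$ the crossed product $C(M)\rtimes_\gamma\mathbb{Z}$ is simple with \emph{no nontrivial projections}, yet it has a unique trace (unique ergodicity), so hypothesis $(2)$ holds vacuously while real rank zero fails dramatically --- the projections generate only $\mathbb{Z}\cdot[1]$ inside $K_0\cong\mathbb{Z}^4$. Consequently your mechanism for recovering $T(A)$ and the pairing $r_A$ from the graded ordered $K$-theory collapses, and a graded ordered isomorphism of $K$-theory does not obviously extend to an isomorphism of full Elliott invariants. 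The actual Toms--Winter proof circumvents this: hypothesis $(2)$ is used to show that $A\otimes U$ has real rank zero for UHF algebras $U$ (tensoring makes the image of $K_0$ dense in $\mathrm{Aff}(T(A))$), whence $A\otimes U$ is tracially AF and classifiable by Lin's theorem via ordered $K$-theory alone; Winter's localization-at-$\mathcal{Z}$ technique then transfers the classification back to $A\cong A\otimes\mathcal{Z}$. Your sketch is missing this rationalization step, and without it the passage from $K_*$-data to the Elliott invariant is a gap, not a formality.
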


The second condition can be removed in case of unique ergodicity. Let's compute graded $K$ theory of $C(S^3 \times S^{6} \times S^{8})\rtimes_{\alpha} \mathbb{Z}$ and $C(S^3 \times S^{6} \times S^{8})\rtimes_{\beta} \mathbb{Z}$. Note that there are a sequence of $g_n, h_n \in D(S^3 \times S^{6} \times S^{8}), t_n, s_n \in S^1$ such that 
\[\lim_{n\to \infty} g_n \mathcal{R}_{t_n} \mathcal{P}_6 g_n^{-1}=\alpha, \lim_{n\to \infty} h_n \mathcal{R}_{s_n} \mathcal{P}_8 h_n^{-1}=\beta. \]

First, we compute $K_0$ and $K_1$ group of $C(S^3 \times S^{6} \times S^{8})\rtimes_{\alpha} \mathbb{Z}$. Recall the Pimsner-voiculescu six exact sequence (\cite{pv}):
\begin{displaymath} 
 \xymatrix
 {K_0 (C(S^3 \times S^{6} \times S^{8})) \ar[r] &K_0(C(S^3 \times S^{6} \times S^{8})\rtimes_{\alpha}\mathbb{Z}) \ar[r] & K_1(C(S^3 \times S^{6} \times S^{8})) \ar[d]^{1-\alpha} \\
  K_0 (C(S^3 \times S^{6} \times S^{8})) \ar[u]^{1-\alpha}  & K_1(C(S^3 \times S^{6} \times S^{8}) \rtimes_{\alpha}\mathbb{Z} )\ar[l] & K_1(C(S^3 \times S^{6} \times S^{8})) \ar[l].}
 \end{displaymath} 

The $K$ theory of $C(S^3 \times S^{6} \times S^{8})$ are 
\begin{eqnarray*}
K_0(C(S^3 \times S^{6} \times S^{8})) &\cong &\mathbb{Z}\oplus \mathbb{Z}\oplus \mathbb{Z}\oplus \mathbb{Z},\\
K_1(C(S^3 \times S^{6} \times S^{8})) &\cong &\mathbb{Z}\oplus \mathbb{Z}\oplus \mathbb{Z}\oplus \mathbb{Z},
 \end{eqnarray*}

$\alpha$ induces $K$-theoretic map on both $K_0$ and $K_1$ as
\begin{displaymath}
	\left(\begin{array}{cccc}
	1&0&0&0\\
	0&-1&0&0\\
    0&0&1&0\\
    0&0&0&-1 
	\end{array}
	\right).
\end{displaymath}
It is immediately to see that 
\begin{eqnarray*}
K_0(C(S^3 \times S^{6} \times S^{8})\rtimes_{\alpha}\mathbb{Z}) &\cong &\mathbb{Z}\oplus \mathbb{Z}\oplus \mathbb{Z}\oplus \mathbb{Z},\\
K_1(C(S^3 \times S^{6} \times S^{8})\rtimes_{\alpha}\mathbb{Z}) &\cong &\mathbb{Z}\oplus \mathbb{Z}\oplus \mathbb{Z}\oplus \mathbb{Z}.
\end{eqnarray*}

Note that $H^1(S^3 \times S^{6} \times S^{8})=0$. 
\begin{theo}[Corollary 3, \cite{AL3}]\label{zran}
Let $M$ be a compact smooth manifold, $H^1(M)=0$, $\phi$ a minimal diffeomorphism of $M$, then $C(M)\rtimes_\phi \mathbb{Z}$ is a simple $C^*$-algebras without nontrivial idempotent. 
\end{theo}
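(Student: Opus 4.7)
My plan is to prove the two assertions of Theorem \ref{zran} separately. Simplicity of $A := C(M) \rtimes_\phi \mathbb{Z}$ is the easier part; the absence of nontrivial idempotents is the heart of the statement and I would attack it through $K$-theory and traces.

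\textbf{Simplicity.} I would first observe that $\phi$ acts freely on $M$: for $n\neq 0$ the fixed-point set of $\phi^n$ is closed and $\phi$-invariant, so by minimality it is either empty or all of $M$. The second alternative would force every $\phi$-orbit to have at most $n$ points, and since orbits are dense by minimality this would make $M$ finite --- excluded once $M$ is positive dimensional. Freeness together with minimality yields topological freeness of the $\mathbb{Z}$-action, and the standard result on crossed products by free minimal $\mathbb{Z}$-actions then gives simplicity of $A$.

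\textbf{No nontrivial idempotent.} Since $\mathbb{Z}$ is amenable there exists a $\phi$-invariant probability measure $\mu$; let $\tau$ be the associated trace on $A$, obtained by composing $\mu$ with the canonical conditional expectation $A \to C(M)$. The trace $\tau$ is faithful because $A$ is simple. For any projection $p \in A$ one has $\tau(p) \in [0,1]$, so it suffices to show $\tau_\ast (K_0(A)) \subset \mathbb{Z}$: then $\tau(p) \in \{0,1\}$ and faithfulness forces $p=0$ or $p=1$.

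To prove integrality of $\tau_\ast$, I would use the Pimsner-Voiculescu sequence to realize $K_0(A)$ as an extension
\[ 0 \to \mathrm{coker}\bigl(1-\phi_\ast \colon K_1(C(M)) \to K_1(C(M))\bigr) \to K_0(A) \to \ker\bigl(1-\phi_\ast \colon K_0(C(M))\to K_0(C(M))\bigr) \to 0, \]
and then check integrality on each end. On the $K_0(C(M))$ side, classes are represented by projection-valued functions on $M$, and $\tau$ evaluates them as the integral of the fiberwise rank against $\mu$; since $M$ is connected this rank is a constant integer and $\mu$ is a probability measure, so the value lies in $\mathbb{Z}$. On the $K_1(C(M))$ side, the composition $\tau \circ \partial_{\mathrm{PV}} \colon K_1(C(M)) \to \mathbb{R}$ is a Chern-Connes-type pairing against the current defined by $\mu$, and a standard computation shows it factors through $H^1(M;\mathbb{R})$, which vanishes by hypothesis. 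The main obstacle I expect is precisely this last identification --- matching $\tau \circ \partial_{\mathrm{PV}}$ with a de Rham current so that the cohomological hypothesis $H^1(M)=0$ can be invoked; the simplicity argument and the reduction to integer-valued traces are routine once this is in hand.
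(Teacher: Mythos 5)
First, a point of comparison: the paper does not prove this statement at all --- it is quoted as Corollary 3 of Connes's Thom-isomorphism paper \cite{AL3} --- so the only proof to measure yours against is Connes's. His argument passes to the mapping torus $T_\phi$ with its suspension flow, applies the Thom isomorphism $K_*(C(T_\phi)\rtimes\mathbb{R})\cong K^{*+1}(T_\phi)$, and identifies the range of the trace on $K_0$ with the pairing of the Ruelle--Sullivan $1$-current of $\mu$ against $H^1(T_\phi;\mathbb{R})$; the Wang sequence gives $H^1(T_\phi;\mathbb{R})\cong\mathrm{coker}(1-\phi^*|_{H^0(M)})\cong\mathbb{R}$ when $H^1(M)=0$, so the pairing sees only the total mass of $\mu$ and the trace range is $\mathbb{Z}$. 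Your route is the legitimate discrete-time analogue (Pimsner--Voiculescu plus a rotation-number computation), and your simplicity argument and the reduction of ``no nontrivial idempotents'' to integrality of $\tau_*$ on $K_0$ are both correct and routine, as you say.

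There are, however, two genuine problems in the $K$-theoretic half. First, your extension is written with the two $K$-groups interchanged: the PV sequence gives $0\to\mathrm{coker}(1-\phi_*\colon K_0(C(M))\to K_0(C(M)))\to K_0(A)\to\ker(1-\phi_*\colon K_1(C(M))\to K_1(C(M)))\to 0$; what you displayed is the extension computing $K_1(A)$. Your subsequent prose treats the correct groups, so this is repairable, but as written it is false. Second, the step you defer --- handling classes $x\in K_0(A)$ whose boundary is a nonzero $[u]\in\ker(1-\phi_*)\subset K_1(C(M))$ --- is the entire content of the theorem, and your proposed mechanism is misaimed: $\mu$ is a $0$-current, so ``pairing against the current defined by $\mu$'' does not factor through $H^1(M;\mathbb{R})$ in any literal sense, and $K_1(C(M))$ can be large even when $H^1(M)=0$ (e.g.\ $K^1(S^3)=\mathbb{Z}$), so the boundary contribution does not vanish for formal reasons. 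The correct finish is: one proves (Exel's rotation number, or a Toeplitz-extension computation) that $\tau_*(x)\equiv\Delta_\mu\bigl((u\circ\phi)u^*\bigr)\pmod{\mathbb{Z}}$, where $\Delta_\mu$ is the de la Harpe--Skandalis determinant; applying $\det$ reduces this to a scalar unitary $v=\det u\colon M\to S^1$, and the hypothesis enters as $H^1(M;\mathbb{Z})\cong[M,S^1]=0$, which gives $v=e^{2\pi ih}$ and hence $\Delta_\mu\bigl((v\circ\phi)\bar v\bigr)=\int_M(h\circ\phi-h)\,d\mu=0$ by $\phi$-invariance of $\mu$. Without both the identification $\tau_*(x)\equiv\rho_\mu(u)$ and this lifting argument, the proof is incomplete at exactly the point where the hypothesis $H^1(M)=0$ must be used.
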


Alain Connes actually had proven that for $C^*$-algebra mentioned in Theorem \ref{zran},  the range of traces is $\mathbb{Z}$. $C^*$-algebra $A$ satisfies $K$-theoretic version of Blackadar's Second Fundermental Comparability question if for any $\eta\in K_0(A)$, $\tau(\eta)>0$ for any trace $\tau $ implies that there is a projection $P\in A$ such that $[P]=\eta$. N. C. Phillips proved that $C(S^3 \times S^{6} \times S^{8})\rtimes_{\alpha} \mathbb{Z}$ satisfies $K$-theoretic version of Blackadar's Second Fundermental Comparability question (\cite{ch5}). Hence we can conclude that $K_0(S^3 \times S^{6} \times S^{8})\rtimes_{\alpha}\mathbb{Z})\cong \mathbb{Z}_+\oplus \mathbb{Z}\oplus \mathbb{Z}\oplus \mathbb{Z}$.
No difference would be made if one substitutes $\alpha$ by $\beta$. 

Theorem \ref{ccla} then implies that
\[C(S^3 \times S^{6} \times S^{8})\rtimes_{\alpha}\mathbb{Z} \cong C(S^3 \times S^{6} \times S^{8})\rtimes_{\beta}\mathbb{Z}.\]

\section{cyclic cohomology}

We prove that $C^\infty (S^3 \times S^{6} \times S^{8})\rtimes_\alpha \mathbb{Z} \ncong C^\infty (S^3 \times S^{6} \times S^{8})\rtimes_\beta \mathbb{Z} $ in this section. For this purpose we will compute the grading structure of their periodical cyclic cohomology.

Let $M$ be a compact manifold and $\gamma $ a diffeomorphism of it. Recall Nest's six exact sequence:

\begin{theo} [\cite{nest}] \label{csix} 
\begin{displaymath} 
 \xymatrix
 {HP^{ev} (C^\infty (M)) \ar[r] & HP^{od}(C^\infty(M)) \rtimes_{\gamma}\mathbb{Z} ) \ar[r]& HP^{od}(C^\infty(M)) \ar[d]^{1-\gamma} \\
  HP^{ev} (C^\infty (M)) \ar[u]^{1-\gamma}  & HP^{ev}(C^\infty(M) \rtimes_{\gamma}\mathbb{Z} )\ar[l] & HP^{od}(C^\infty(M)) \ar[l] }.
 \end{displaymath}
\end{theo}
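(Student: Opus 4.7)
The plan is to derive Nest's six-term sequence as a smooth-category analogue of the Pimsner--Voiculescu extension, combined with excision for periodic cyclic cohomology of locally convex algebras. First I would construct a smooth Toeplitz algebra $\mathcal{T}^\infty_\gamma$: doubly infinite matrices over $C^\infty(M)$ with rapid off-diagonal decay, controlled by the seminorms $\|\cdot\|_k$ on $C^\infty(M)$ together with the growth weights $\rho_k$ that already enter the definition of $C^\infty(M)\rtimes_\gamma\mathbb{Z}$, truncated to the non-negative half of $\mathbb{Z}$. The goal is a topologically split short exact sequence of locally convex algebras
\[ 0 \to C^\infty(M)\,\hat{\otimes}\,\mathcal{S} \longrightarrow \mathcal{T}^\infty_\gamma \longrightarrow C^\infty(M)\rtimes_\gamma\mathbb{Z} \to 0, \]
where $\mathcal{S}$ is the algebra of rapidly decaying matrices (the smooth analogue of the compact operators) and the section of the surjection is required only to be continuous linear.

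Second, I would invoke the excision theorem of Cuntz--Quillen (or the direct verification in \cite{nest}) for periodic cyclic cohomology of topologically split extensions of locally convex algebras. This produces a long exact hexagon
\[ \cdots \to HP^i(C^\infty(M)\rtimes_\gamma\mathbb{Z}) \to HP^i(\mathcal{T}^\infty_\gamma) \to HP^i(C^\infty(M)\,\hat{\otimes}\,\mathcal{S}) \xrightarrow{\partial} HP^{i+1}(C^\infty(M)\rtimes_\gamma\mathbb{Z}) \to \cdots \]
Stability of $HP^*$ under smooth Morita equivalence gives $HP^*(C^\infty(M)\,\hat{\otimes}\,\mathcal{S})\cong HP^*(C^\infty(M))$, and a smooth adaptation of the classical contraction of the Toeplitz algebra onto its scalar diagonal yields $HP^*(\mathcal{T}^\infty_\gamma)\cong HP^*(C^\infty(M))$, so both auxiliary terms in the hexagon become $HP^*(C^\infty(M))$.

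Third, I would identify the internal arrow $HP^i(\mathcal{T}^\infty_\gamma)\to HP^i(C^\infty(M)\,\hat{\otimes}\,\mathcal{S})$ with the operator $1-\gamma^*$ on $HP^i(C^\infty(M))$, by tracking how the unilateral shift inside $\mathcal{T}^\infty_\gamma$ conjugates the inclusion of $C^\infty(M)$ against the $\gamma$-twist. Substituting this identification into the excision hexagon collapses it to precisely the six-term sequence of the statement, with $1-\gamma^*$ appearing as the two unlabeled vertical maps between copies of $HP^{ev}(C^\infty(M))$ and $HP^{od}(C^\infty(M))$.

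The main obstacle is the first step: one must verify that $\mathcal{T}^\infty_\gamma$ is a genuine locally convex algebra whose seminorms are simultaneously compatible with the smooth crossed product topology on the quotient and the Schwartz-type topology on the smooth-compacts ideal, and that a continuous linear section of the quotient map exists within this topology. The secondary difficulty is the explicit identification $\partial=1-\gamma^*$, which is a Chern-character-level computation tracking how the generator $u$ of $\mathbb{Z}$ lifts through the smooth Toeplitz extension and produces the twisted shift on the ideal; this is the point at which the proof departs from purely formal homological considerations and uses the specific structure of the $\mathbb{Z}$-action.
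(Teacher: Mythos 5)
The paper contains no proof of this statement: Theorem \ref{csix} is quoted as a black box from Nest's paper \cite{nest}, so there is no internal argument to compare yours against. Your proposal is a coherent strategy and would work in outline, but it is genuinely different from the proof in the cited source. Nest does not pass through a Toeplitz extension; he works directly with the cyclic $(b,B)$-bicomplex of $C^\infty(M)\rtimes_\gamma\mathbb{Z}$, decomposes cochains according to the $\mathbb{Z}$-grading of the crossed product (equivalently, by averaging over the dual circle action), shows that only the component supported at $0\in\mathbb{Z}$ contributes to the periodic theory, and identifies that component with the mapping cone of $1-\gamma^*$ on the cyclic complex of $C^\infty(M)$; the six-term sequence is the long exact sequence of that cone. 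Your route instead imports excision for topologically split extensions of locally convex algebras, smooth stability $HP^*(C^\infty(M)\,\hat{\otimes}\,\mathcal{S})\cong HP^*(C^\infty(M))$, and a diffotopy contraction of the smooth Toeplitz algebra --- machinery that largely postdates Nest's 1988 paper. The two difficulties you flag are the genuine ones: you must verify that the seminorms built from the weights $\rho_k$ make $\mathcal{T}^\infty_\gamma$ a locally convex algebra admitting a continuous linear section of the quotient map (this is exactly where the temperedness of the action, already built into the paper's definition of the smooth crossed product, is consumed), and the identification of the connecting map with $1-\gamma^*$ requires an explicit cocycle-level computation rather than formal nonsense. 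What your approach buys is conceptual uniformity with the $K$-theoretic Pimsner--Voiculescu sequence used in Section 4. What Nest's chain-level approach buys --- and why it matters for this paper --- is that it also produces the filtration data and the associated graded groups $E_\infty^n$ of Theorem \ref{cgra}, which the periodic excision hexagon alone does not yield; since Propositions \ref{6gra} and \ref{8gra} rest entirely on those finer invariants, a proof of Theorem \ref{csix} by excision would still leave the main computation of the paper unsupported.
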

 
Let $H_*(M)$ be the deRham homology of $M$. 

\begin{theo}[\cite{Al1}]\label{cman}
\[HP^{ev}(M)\cong H_0(M)\oplus \dots \oplus H_{2n}\oplus \dots,\]
\[HP^{ev}(M)\cong H_1(M)\oplus \dots \oplus H_{2n+1}\oplus \dots,\]
\end{theo}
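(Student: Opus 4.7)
The plan is to follow Connes' original computation in \cite{Al1}, reducing the periodic cyclic cohomology of $C^\infty(M)$ to the de Rham homology of $M$ via the Hochschild--Kostant--Rosenberg (HKR) theorem in the topological category and the identification of the Connes $B$-operator with the de Rham boundary on currents.

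First, I would establish the continuous HKR theorem: for the Fr\'echet algebra $C^\infty(M)$, the continuous Hochschild cohomology $HH^n(C^\infty(M))$ is naturally isomorphic to the space $\Omega_n(M)$ of de Rham $n$-currents on $M$. The isomorphism in one direction is the antisymmetrization map sending a current $C$ to the Hochschild cocycle
\[ (f_0,\ldots,f_n) \mapsto \langle C, f_0\, df_1 \wedge \cdots \wedge df_n \rangle. \]
Verifying that this is a quasi-isomorphism of continuous Hochschild cochains requires a partition-of-unity/local-to-global argument together with an explicit Koszul-type resolution of the diagonal in $M\times M$, followed by an explicit contracting homotopy on local charts. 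This is the technical heart of the proof; it is precisely the step that cannot be borrowed from the purely algebraic HKR theorem, since one must work throughout with jointly continuous multilinear functionals on $C^\infty(M)$.

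Next, I would compute the effect of the Connes $B$-operator on a Hochschild cocycle coming from a current and check that, up to the normalization factor $1/n!$, $B$ corresponds under the HKR isomorphism to the de Rham boundary operator $d$ on currents. This turns the $(b,B)$-bicomplex of $C^\infty(M)$ into (a direct sum of copies of) the de Rham current complex with vanishing $b$-differential after HKR. Computing the cohomology of the total periodic $(b,B)$-bicomplex, equivalently passing to the direct limit under the periodicity operator $S$ appearing before Theorem \ref{csix}, gives the de Rham homology of $M$ graded by parity:
\[ HP^{ev}(C^\infty(M)) \cong \bigoplus_{k\geq 0} H_{2k}(M), \qquad HP^{od}(C^\infty(M)) \cong \bigoplus_{k\geq 0} H_{2k+1}(M). \]

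The main obstacle, as already emphasized, is the continuous HKR theorem; once that is in hand and $B$ is identified with $d$, the rest of the argument is a formal spectral-sequence manipulation on the $(b,B)$-bicomplex. The decomposition into even and odd parts is then immediate from the parity grading of the bicomplex, so that the two isomorphisms in the statement follow in parallel.
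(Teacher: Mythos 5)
Your outline is correct and is essentially Connes' own argument from \cite{Al1}, which is exactly what the paper relies on (Theorem \ref{cman} is quoted there without proof): the continuous Hochschild--Kostant--Rosenberg identification of $HH^n(C^\infty(M))$ with de Rham $n$-currents, the identification of the operator $B$ with the current boundary, and the degeneration of the $(b,B)$-bicomplex giving $HP^{ev}\cong\bigoplus_k H_{2k}(M)$ and $HP^{od}\cong\bigoplus_k H_{2k+1}(M)$. You also correctly identify the technical core (the continuous HKR quasi-isomorphism via a Koszul resolution of the diagonal) and implicitly fix the typo in the statement, whose second line should read $HP^{od}$.
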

By basic deRham homology and cohomology theory, for a $m$ dimensional manifold, we have $H_n\cong H^{m-n}$. 

Now we have got all the recipe prepared to compute the periodic cyclic cohomology. For $\alpha$, it induces maps $\alpha_*$ between the $H_*$ as follows:
\[\alpha_0=1, \alpha_6=-1, \alpha_8=1, \alpha_{14}=-1,\]
\[\alpha_3=1, \alpha_9=-1, \alpha_{11}=1, \alpha_{17}=-1,\]
while $\beta_*$ are
\[\beta_0=1, \beta_6=1, \beta=-1, \beta_{14}=-1,\]
\[\beta_3=1, \beta_9=1, \beta_{11}=-1, \beta_{17}=-1.\]
Hence the $ HP^*(C^\infty(S^3 \times S^{6} \times S^{8}))_{\alpha}\mathbb{Z})$ and $ HP^*(C^\infty(S^3 \times S^{6} \times S^{8})) \rtimes_{\beta}\mathbb{Z})$ can be computed as 
\begin{eqnarray*}
HP^{od}(C^\infty(S^3 \times S^{6} \times S^{8}))\rtimes_{\alpha}\mathbb{Z})&=& 	\mathbb{C}\oplus \mathbb{C}\oplus\mathbb{C}\oplus\mathbb{C},\\
HP^{ev}(C^\infty(S^3 \times S^{6} \times S^{8}))\rtimes_{\alpha}\mathbb{Z})&=& 	\mathbb{C}\oplus \mathbb{C}\oplus\mathbb{C}\oplus\mathbb{C},\\
HP^{od}(C^\infty(S^3 \times S^{6} \times S^{8}))\rtimes_{\beta}\mathbb{Z})&=& 	\mathbb{C}\oplus \mathbb{C}\oplus\mathbb{C}\oplus\mathbb{C},\\
HP^{ev}(C^\infty(S^3 \times S^{6} \times S^{8}))\rtimes_{\beta}\mathbb{Z})&=& 	\mathbb{C}\oplus \mathbb{C}\oplus\mathbb{C}\oplus\mathbb{C}.
\end{eqnarray*}

Now let us recall how to compute groups $E_\infty^n$. Let $\Psi_n$ be the space of $\mathit{n}$-th deRham currents of $M$, $\partial$ be the usual boundary map, 
\begin{eqnarray*}
H^n_{eq}(M, \gamma) & \triangleq &homology\: group\: of\: (Ker(1-\gamma) |\Psi_n , \partial),\\
H^n_{coeq}(M, \gamma) &\triangleq & homology\: group\: of\:(Coker(1-\gamma) | \Psi_n, \partial).
\end{eqnarray*}

\begin{theo}[\cite{nest}] \label{cgra} 
$E_\infty ^n(C^{\infty}(M) \rtimes_\alpha\mathbb{Z}) = H^n _{eq}(M,\gamma) \oplus H^{n-1} _{coeq}(M,\gamma)$.
\end{theo}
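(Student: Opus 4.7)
The plan is to establish a convergent spectral sequence whose $E_2$-page is $H^n_{eq}(M,\gamma) \oplus H^{n-1}_{coeq}(M,\gamma)$ and whose abutment is $E_\infty^n(\mathcal{A})$, where $\mathcal{A} = C^\infty(M) \rtimes_\gamma \mathbb{Z}$. I would build this from the Connes $(b,B)$-bicomplex of $\mathcal{A}$ and exploit the $\mathbb{Z}$-grading $\mathcal{A} = \bigoplus_n C^\infty(M) u^n$ to filter it, following the strategy of \cite{nest}.

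First I would decompose the Hochschild/cyclic bicomplex of $\mathcal{A}$ according to the total $u$-degree of tensor factors, writing it as $\bigoplus_{n\in\mathbb{Z}} C_*(C^\infty(M), {}_{\gamma^n}C^\infty(M))$ with its compatible $B$-operator. The essential input is the continuous Connes--Hochschild--Kostant--Rosenberg theorem: $HH_*(C^\infty(M))$ is the space $\Psi_*$ of deRham currents, and more generally the twisted version $HH_*(C^\infty(M), {}_{\gamma^n}C^\infty(M))$ is supported on the fixed-point set of $\gamma^n$. Minimality of $\gamma$ on an infinite compact manifold forces $\gamma^n$ to be fixed-point free for every $n \neq 0$, so all those summands drop out of the $E_1$-page, leaving only the $n=0$ term together with the degree-raising contribution coming from the $u$-direction. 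After this simplification one is reduced to a two-column double complex whose columns are $(\Psi_*, \partial)$ and whose horizontal differential is $1 - \gamma_*$, shifted by one.

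I would then run the spectral sequence of that two-column double complex by taking horizontal cohomology first: this produces $\ker(1-\gamma)|_{\Psi_n}$ in one column and $\coker(1-\gamma)|_{\Psi_{n-1}}$ in the other, with the residual $\partial$ as vertical differential. Passing to vertical homology delivers precisely $H^n_{eq}(M,\gamma) \oplus H^{n-1}_{coeq}(M,\gamma)$ on the $E_2$-page, and since $\mathbb{Z}$ has cohomological dimension one there is no room for higher differentials, so $E_2 = E_\infty$. The hard step will be justifying the vanishing of the twisted $n \neq 0$ summands in the locally convex / topologically completed setting: one must work with projective tensor products, verify that the smooth Koszul resolution still computes twisted Hochschild homology of $C^\infty(M)$, and check that the absence of fixed points really forces acyclicity rather than merely killing the $b$-homology up to some residual $B$-contribution. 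A cleaner alternative, which sidesteps this, is to combine the Nest six-term sequence (Theorem \ref{csix}) with compatibility of the Connes filtration: since the $E_\infty$-grading on $HP^*$ of $C^\infty(M)$ is the deRham grading (Theorem \ref{cman}), a diagram chase through the $1-\gamma_*$ map, together with the deRham duality $H_n \cong H^{\dim M - n}$, identifies the $\ker$ and $\coker$ contributions graded piece by graded piece and produces the claimed formula.
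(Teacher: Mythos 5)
This statement is not proved in the paper at all: it is quoted verbatim from Nest's paper \cite{nest} and used as a black box, so there is no internal proof to compare against. Your sketch is, in outline, a faithful reconstruction of Nest's actual argument: decompose the cyclic bicomplex of $C^\infty(M)\rtimes_\gamma\mathbb{Z}$ by $u$-degree, kill the twisted Hochschild summands for $n\neq 0$ using the fact that minimality on an infinite manifold forces $\gamma^n$ to be fixed-point free, and reduce to the two-column complex with horizontal differential $1-\gamma_*$ and vertical differential $\partial$, which degenerates for column-count reasons. Two caveats. First, you assert that the abutment of your spectral sequence is $E_\infty^n(\mathcal{A})$, i.e.\ the graded quotient $S(HC^n)/S(HC^{n-2})$ of the Connes filtration on $HP^*$; identifying the filtration coming from your double complex with the Connes $S$-filtration is a genuine step (it is where Nest has to work), not a bookkeeping convention, and your sketch leaves it implicit. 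Second, the ``cleaner alternative'' in your last sentences does not actually work: the six-term sequence of Theorem \ref{csix} computes $HP^{ev}$ and $HP^{od}$ only as ungraded groups, and a diagram chase through it cannot recover the filtration quotients $E_\infty^n$, which is precisely the extra information the theorem asserts. So the first route is the right one and should be regarded as the proof; the second should be dropped.
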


Therefore computation of the grading structure can be reduced to computation of $H^n_{eq}(M, \gamma)$ and $H^n_{coeq}(M, \gamma)$. 

\begin{prop}\label{6gra}
	\[H^0_{coeq}(S^3 \times S^{6} \times S^{8}, \beta)=\mathbb{C},  H^6_{coeq}(S^3 \times S^{6} \times S^{8}, \beta)=\mathbb{C},\]
	\[H^3_{eq}(S^3 \times S^{6} \times S^{8},\beta)=\mathbb{C}, H^9_{eq}(S^3 \times S^{6} \times S^{8}, \beta)=\mathbb{C},\]
	with all the other $H^*_{eq}(S^3 \times S^{6} \times S^{8}, \beta)$ and $H^*_{coeq}(S^3 \times S^{6} \times S^{8}, \beta)$ nothing but $\{0\}$. Further more,  we can conclude that 
\[
\left\{	
\begin{array}{cc}
E_\infty^i(C^{\infty}(S^3 \times S^{6} \times S^{8}) \rtimes_\beta\mathbb{Z})\cong \mathbb{C}, & i= 1,3,7,9\\
E_\infty^i(S^3 \times S^{6} \times S^{8}) \rtimes_\beta \mathbb{Z})\cong \{0\},& i=\ else .
	\end{array}
\right.
\]
\end{prop}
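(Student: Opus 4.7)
The plan is to follow the strategy of the author's earlier work \cite{liu}: first identify the induced map $\beta_*$ on the de Rham homology of $M=S^3\times S^6\times S^8$, then compute $H^n_{eq}(M,\beta)$ and $H^n_{coeq}(M,\beta)$ using unique ergodicity together with the circle symmetry inherited from $\mathcal R$, and finally plug the answer into Theorem \ref{cgra}.

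By the Künneth formula, $H_*(M,\mathbb C)$ is one-dimensional in each of the degrees $0,3,6,8,9,11,14,17$ and zero otherwise. Because $\beta$ is the $\mathcal T$-limit of $g_n\mathcal R_{s_n}\mathcal P_8 g_n^{-1}$ and both conjugation by diffeomorphisms and $C^\infty$-limits act trivially on homology, $\beta_*$ equals $(\mathcal R_{s_n}\mathcal P_8)_*$. Since $\mathcal R_{s_n}$ is isotopic to the identity through the $S^1$-action and $\mathcal P_8$ has degree $-1$, one obtains $\beta_*=+1$ on the four generators in degrees $0,3,6,9$ and $\beta_*=-1$ on the four in degrees $8,11,14,17$, reproducing the table displayed just before Proposition \ref{6gra}.

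To pass from homology to invariant (and coinvariant) currents I would exploit two facts. First, in the degrees where $\beta_*$ acts as $-1$, the operator $1-\beta_*$ is invertible on $H_*$; a standard filtration argument on the complex of currents transfers this to kill both $H^n_{eq}$ and $H^n_{coeq}$ in degrees $8,11,14,17$. Second, for the fixed classes, unique ergodicity together with the $\mathcal R$-symmetry on the $S^3$ factor provides explicit generators. Unique ergodicity pins $\mathrm{Ker}(1-\beta)|\Psi_0$ down to the one-dimensional space spanned by the invariant probability measure $\mu$, while the Hopf connection on $S^3$ pulls back to an $\mathcal R$-invariant $1$-form $\eta$ on $M$. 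Transporting $\eta$ (and its wedge products with the normalised $\mathcal R$-invariant forms on $S^6\times S^8$) through the approximating diffeomorphisms $g_n$ and averaging against $\mu$ via Birkhoff sums, which converge uniformly by Proposition \ref{ergd}, produces genuine $\beta$-invariant currents. A direct check then shows that these cycles represent the fixed classes in degrees $3$ and $9$, contributing $\mathbb C$ to $H^n_{eq}$, whereas in degrees $0$ and $6$ the corresponding cycles are $\partial$-boundaries inside the invariant complex, so they vanish in $H^n_{eq}$ but survive in $H^n_{coeq}$.

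With both tables in hand, Theorem \ref{cgra} immediately yields $E_\infty^i=H^i_{eq}\oplus H^{i-1}_{coeq}$, which is $\mathbb C$ for $i=1,3,7,9$ and $\{0\}$ otherwise. The main obstacle is the third paragraph: one has to ensure that the $\mathcal R$-invariant currents built on $M$ genuinely pass through the limit defining $\beta$, and that their boundaries hit precisely the fixed classes in degrees $0$ and $6$ while leaving those in degrees $3$ and $9$ alone. Unique ergodicity is the essential tool both for the convergence of the averaging and for identifying the resulting cycles with prescribed homology classes; this is the same mechanism that was used in \cite{liu} for odd-dimensional spheres, and the present situation is a direct adaptation.
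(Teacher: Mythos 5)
Your identification of $\beta_*$ on $H_*(S^3\times S^6\times S^8)$ agrees with the paper, but your route from there to the eq/coeq groups diverges from the paper's in a way that leaves genuine gaps. The paper never attempts a direct degree-by-degree computation of $H^n_{eq}$ and $H^n_{coeq}$: it exhibits four explicit currents (integration against $dvol_3\wedge dvol_6\wedge dvol_8$ on $\Psi_0$, $\omega\mapsto\int\omega\wedge dvol_6\wedge dvol_8$ on $\Psi_3$, $\omega\mapsto\int\omega\wedge dvol_8$ on $\Psi_9$, and $\omega\mapsto\int\omega\wedge dvol_3\wedge dvol_8$ on $\Psi_6$) that give \emph{lower} bounds $\mathbb{C}\subseteq H^0_{coeq},H^3_{eq},H^9_{eq},H^6_{coeq}$, and then converts these into equalities by comparing the total dimension of the $E_\infty^i$ against $HP^{od}(C^\infty(M)\rtimes_\beta\mathbb{Z})\cong\mathbb{C}^4$ from Nest's sequence. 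You never invoke that global upper bound, so you must control the invariant and coinvariant currents in every degree directly, and the two devices you propose for this do not work.

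First, the claim that invertibility of $1-\beta_*$ on $H_n(M)$ transfers ``by a standard filtration argument'' to the vanishing of $H^n_{eq}$ and $H^n_{coeq}$ is unjustified: these are the homologies of $\mathrm{Ker}(1-\beta)$ and $\mathrm{Coker}(1-\beta)$ on the infinite-dimensional complex $\Psi_*$ of currents, and they are dynamical rather than homotopy invariants --- invariant distributions beyond the invariant measure and non-closed range of $1-\beta$ are precisely the phenomena that can make them large even when $1-\beta_*$ is invertible on homology. Second, your mechanism for degrees $0$ and $6$ is wrong as stated: the $\beta$-invariant $0$-current given by the invariant measure (or by the volume form) pairs nontrivially with the constant function $1$, whereas every boundary $\partial\tau_1$ annihilates constants; hence this cycle is \emph{not} a boundary in the invariant subcomplex, and your argument would yield $H^0_{eq}\neq 0$ rather than $0$. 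Moreover ``is a boundary in the invariant complex'' does not imply ``survives in $H^n_{coeq}$'' --- these are different complexes. The paper instead places this current in $H^0_{coeq}$ by noting that $(1-\beta)\tau_0$ also annihilates constants, and handles $H^6_{coeq}$ by a separate argument exploiting the convergence $g_n\mathcal{R}_{s_n}\mathcal{P}_8 g_n^{-1}\to\beta$. Without the $HP^{od}$ dimension count there is no path from your four explicit classes to the claimed vanishing of all the remaining groups.
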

\begin{proof}
 In this proof, we denote the volume form on $S^3$, $S^6$, $S^8$ by 
 $dvol_3$, $dvol_6$, $dvol_8$ respectively. \\
	$\mathnormal{(1)}$ $H^0_{coeq}(S^3 \times S^{6} \times S^{8}, \beta)$. $dvol_3\wedge dvol_6 \wedge dvol_8$ defines an element $\tau$ in $\Psi_0$. For any $f\in C^\infty (S^3 \times S^{6} \times S^{8})$, 
	\[\tau(f)\triangleq \int_{S^3 \times S^{6} \times S^{8}}f dvol_3\wedge dvol_6 \wedge dvol_8.\]
	By stokes theorem, $\partial \tau=0$. However, there are no $\tau_1 \in \Psi_0$ and $\tau_0 \in \Psi_0$ such that
	\[\tau= \partial\tau_1 + (1-\beta)\tau_0. \]
	In fact, for $f\in C^\infty(S^3 \times S^{6} \times S^{8}), f\equiv 1$, $\tau(f)=vol(S^3)vol(S^6)vol(S^8)\neq 0$, but $ \partial\tau_1(f) + (1-\beta)\tau_0(f)=0$. Hence $H^0_{coeq}(S^3 \times S^{6} \times S^{8}, \beta)$ contains at least one direct summand of $\mathbb{C}$.\\ 
	$\mathnormal{(2)}$ $H^3_{eq}(S^3 \times S^{6} \times S^{8}, \beta)$.  $ dvol_6 \times dvol_8$ defines an element $\tau$ in $\Psi_3$ as follows, for any $3$-form $\omega \in \Lambda^3$,
	\[\tau(\omega)=  \int_{S^3 \times S^{6} \times S^{8}} \omega\wedge dvol_6 \wedge dvol_8.\]
	$\tau $ will never be a boundary since $\tau(dvol_3)= vol(S^3)vol(S^6)vol(S^8)\neq 0$. However we have $\partial (\tau)=0$. In fact, one need only to concern the boundary 
	$\omega \in \partial\Lambda^2_{S^3}$. This implies that $H^3_{eq}(S^3 \times S^{6} \times S^{8}, \beta)$ contains at least one direct summand of $\mathbb{C}$.\\
	$\mathnormal{(3)}$ By the same argument as shown in $\mathnormal{(2)}$ we obtain that 
	$H^9_{eq}(S^3 \times S^{6} \times S^{8},\beta)$ contains at least one direct summand of $\mathbb{C}$, which is generated by 
	$dvol_8$.\\
	$\mathnormal{(4)}$ $H^6_{coeq}(S^3 \times S^{6} \times S^{8}, \beta)$. Consider the element $\tau$ defined by 
	\[\tau(\omega)=\int_{S^3\times S^6\times S^8} \omega\wedge dvol_3 \wedge dvol_8, \forall \omega\in \wedge^6. \]
	 As we have shown, $\partial (\tau)=0$. We claim that there are no $\tau_7 \in \Psi_7$ and $\tau_6 \in \Psi_6$ such that
	\[\tau= \partial\tau_7 + (1-\beta)\tau_6. \]
	Recall that there is a sequence of $g_n \in D(S^3 \times S^{6} \times S^{8})$, and $s_n \in S^1$ such that 
	\[\lim_{n\to \infty} g_n \mathcal{R}_{s_n} \mathcal{P}_8 g_n^{-1} =\beta.\]
	For any $n$, $g_n$ induce either $I$ or $-I$ on $H^6(S^3 \times S^{6} \times S^{8})$. Without loss of generality, we assume that for all $n$, $g_n$ induces identity map on $H^6(S^3 \times S^{6} \times S^{8})$.  Assume that there exist $\tau_7 \in \Psi_7$ and $\tau_6 \in \Psi_6$, we have 
\begin{align*}
&(1-\beta)\tau_6(dvol_6 -g_n dvol_6)\\
=&\tau(dvol_6 -g_n dvol_6)-\partial \tau_7(dvol_6 -g_n dvol_6)\\
=&0
\end{align*} 
	 Note that $g_n \mathcal{R}_{s_n} \mathcal{P}_8 g_n^{-1}$ converge to $\alpha$ uniformly, 
\begin{eqnarray*}
\tau((1- \beta)dvol_6 )&=& \tau((1- \beta)g_n dvol_6)\\
&=& \tau(\lim_{n\to \infty} (1- g_n \mathcal{R}_{s_n} \mathcal{P}_8 g_n^{-1}) g_n dvol_6)\\
	&=&\tau(\lim_{n\to \infty} g_n(1- \mathcal{R}_{s_n} \mathcal{P}_8)  dvol_6)
	\\
	&=&0.
\end{eqnarray*}
Thus $\partial \tau_7(dvol_6) + (1-\beta)\tau_6(dvol_6)=0$. However, $\tau(dvol_6)$ is not $0$. Hence there contains at least one direct summand of $\mathbb{C}$ in $H^6_{coeq}(S^3 \times S^{6} \times S^{8}, \beta)$.
In summary, each of $E_\infty ^1(C^{\infty}(S^3 \times S^{6} \times S^{8}) \rtimes_\beta \mathbb{Z})$, $E_\infty ^3(C^{\infty}(S^3 \times S^{6} \times S^{8}) \rtimes_\beta \mathbb{Z})$, $E_\infty ^7(C^{\infty}(S^3 \times S^{6} \times S^{8}) \rtimes_\beta\mathbb{Z})$,$E_\infty ^9(C^{\infty}(S^3 \times S^{6} \times S^{8}) \rtimes_\beta \mathbb{Z})$ contains at least one direct summand of $\mathbb{C}$. Recall that 
\[HP^{od}(C^\infty(S^3 \times S^{6} \times S^{8})) \rtimes_{\beta}\mathbb{Z})= \mathbb{C}\oplus \mathbb{C}\oplus\mathbb{C}\oplus\mathbb{C},\]
one can see 

\[
\left\{	
\begin{array}{cc}
E_\infty^i(C^{\infty}(S^3 \times S^{6} \times S^{8}) \rtimes_\beta \mathbb{Z})\cong \mathbb{C}, & i= 1,3,7,9\\
E_\infty^i(S^3 \times S^{6} \times S^{8}) \rtimes_\beta \mathbb{Z})\cong \{0\},& i=\ else .
	\end{array}
\right.
\]

\end{proof}

In the same reason we have 

\begin{prop}\label{8gra}
	\[H^0_{coeq}(S^3 \times S^{6} \times S^{8}, \alpha 	)=\mathbb{C},  H^8_{coeq}(S^3 \times S^{6} \times S^{8}, \alpha)=\mathbb{C},\]
	\[H^3_{eq}(S^3 \times S^{6} \times S^{8}, \alpha)=\mathbb{C}, H^{11}_{eq}(S^3 \times S^{6} \times S^{8}, \alpha)=\mathbb{C},\]
	with all the other $H^*_{eq}(S^3 \times S^{6} \times S^{8}, \alpha)$ and $H^*_{coeq}(S^3 \times S^{6} \times S^{8}, \alpha)$ nothing but $\{0\}$. The grading structure (deRham homology) are then
	\[
\left\{	
\begin{array}{cc}
E_\infty^i(C^{\infty}(S^3 \times S^{6} \times S^{8}) \rtimes_\alpha \mathbb{Z})\cong \mathbb{C}, & i= 1,3,9,11\\
E_\infty^i(C^{\infty}(S^3 \times S^{6} \times S^{8}) \rtimes_\alpha \mathbb{Z})\cong \{0\},& i=\ else .
	\end{array}
\right.
\]
\end{prop}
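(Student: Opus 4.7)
The plan is to mirror the argument of Proposition \ref{6gra} verbatim, swapping the roles of $S^6$ and $S^8$ throughout. Since $\alpha$ arises from the antipode $\mathcal{P}_6$ on $S^6$ rather than $\mathcal{P}_8$ on $S^8$, the $\alpha$-invariant cohomology classes of $S^3\times S^6\times S^8$ are exactly those not involving the $6$-dimensional factor, sitting in degrees $0, 3, 8, 11$. I expect these four invariant degrees to carry the four one-dimensional contributions, generated by the currents $\tau_0(f) = \int f\, dvol_3\wedge dvol_6\wedge dvol_8 \in \Psi_0$, $\tau_3(\omega) = \int \omega\wedge dvol_6\wedge dvol_8 \in \Psi_3$, $\tau_8(\omega) = \int \omega\wedge dvol_3\wedge dvol_6 \in \Psi_8$, and $\tau_{11}(\omega) = \int \omega\wedge dvol_6 \in \Psi_{11}$, representing $H^0_{coeq}$, $H^3_{eq}$, $H^8_{coeq}$, and $H^{11}_{eq}$ respectively. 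The general pattern is that for each $\alpha$-invariant cohomology class one forms the current whose integrand is the wedge of the volume forms on the complementary sphere factors.

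For $\tau_0$, $\tau_3$, and $\tau_{11}$ the nontriviality verifications should be essentially verbatim copies of parts $(1)$, $(2)$, and $(3)$ of Proposition \ref{6gra}. Specifically, $\tau_0(1) = \mathrm{vol}(S^3)\mathrm{vol}(S^6)\mathrm{vol}(S^8)\neq 0$ while $\partial\tau_1(1) + (1-\alpha)\tau_0'(1) = 0$ for any $\tau_1,\tau_0'$, because $d\,1 = 0$ and $\alpha^*1 = 1$. For $\tau_3$ and $\tau_{11}$, Stokes' theorem gives $\partial\tau = 0$ and pairing against the closed forms $dvol_3$ and $dvol_3\wedge dvol_8$ respectively yields the nonzero value $\mathrm{vol}(S^3)\mathrm{vol}(S^6)\mathrm{vol}(S^8)$, while any boundary in the equivariant subcomplex annihilates closed forms.

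The main obstacle will be the nontriviality of $\tau_8$ in $H^8_{coeq}$, analogous to part $(4)$ of Proposition \ref{6gra}. Assuming for contradiction $\tau_8 = \partial\tau_9 + (1-\alpha)\tau_8'$, each $g_n$ acts on $H^8(S^3\times S^6\times S^8)$ by $\pm 1$ — WLOG by $+1$ — so $dvol_8 - g_n^*dvol_8$ is exact; both $\tau_8$ and $\partial\tau_9$ vanish on it, forcing $(1-\alpha)\tau_8'(dvol_8 - g_n^*dvol_8) = 0$ and hence $\tau_8((1-\alpha)dvol_8) = \tau_8((1-\alpha)g_n^*dvol_8)$ for every $n$. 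The crucial observation is that $\mathcal{R}_{t_n}\mathcal{P}_6$ fixes $dvol_8$ pointwise, since both $\mathcal{R}_{t_n}$ and $\mathcal{P}_6$ act as the identity on the $S^8$ factor; consequently the diagonal-limit identity
\[
(1 - g_n\mathcal{R}_{t_n}\mathcal{P}_6 g_n^{-1})\,g_n\,dvol_8 \;=\; g_n(1 - \mathcal{R}_{t_n}\mathcal{P}_6)\,dvol_8 \;=\; 0
\]
holds for every $n$, and passing to $n\to\infty$ forces $\tau_8((1-\alpha)dvol_8) = 0$. This will contradict $\tau_8(dvol_8) = \mathrm{vol}(S^3)\mathrm{vol}(S^6)\mathrm{vol}(S^8) \neq 0$.

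Finally, to conclude that all other $H^*_{eq}$ and $H^*_{coeq}$ groups vanish, I would invoke the same dimension-counting used at the end of Proposition \ref{6gra}: by Theorem \ref{cgra} the four classes above already contribute $\mathbb{C}$ to each of $E_\infty^1$, $E_\infty^3$, $E_\infty^9$, $E_\infty^{11}$, while the previous section's computation $HP^{od}(C^\infty(S^3\times S^6\times S^8)\rtimes_\alpha\mathbb{Z}) = \mathbb{C}^4$ leaves no room for any further nontrivial contribution.
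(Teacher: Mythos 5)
Your proposal is correct and is essentially the paper's own argument: the paper derives Proposition \ref{8gra} ``in the same reason'' as Proposition \ref{6gra}, and your mirroring --- the generators $\int(\cdot)\wedge dvol_6\wedge dvol_8$, $\int(\cdot)\wedge dvol_3\wedge dvol_6$, $\int(\cdot)\wedge dvol_6$ in the invariant degrees $0,3,8,11$, the same eq/coeq assignments, and the key observation that $\mathcal{R}_{t_n}\mathcal{P}_6$ fixes $dvol_8$ so that $(1-\mathcal{R}_{t_n}\mathcal{P}_6)dvol_8=0$ --- is exactly that argument. The concluding dimension count via $HP^{od}=\mathbb{C}^4$ also matches the paper's (and, like the paper's, only controls the odd-degree $E_\infty$ groups).
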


Proposition \ref{6gra} and \ref{8gra} implies
\begin{cor}\label{scla}
	The smooth crossed product algebra defined by $\alpha$ is not isomorphism to the one defined by $\beta$, i.e. 
	\[C^\infty (S^3 \times S^{6} \times S^{8})\rtimes_{\alpha}\mathbb{Z} \ncong C^\infty (S^3 \times S^{6} \times S^{8})\rtimes_{\beta}\mathbb{Z}.\]

\end{cor}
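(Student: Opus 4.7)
The plan is to read off the corollary directly from the two preceding propositions, using the fact that the grading groups $E_\infty^n$ are isomorphism invariants of the smooth crossed product algebra. First I would recall from Section~2 that $E_\infty^n(\mathcal{A}) \cong S(HC^n(\mathcal{A}))/S(HC^{n-2}(\mathcal{A}))$, and that cyclic cohomology is a functorial invariant of locally convex topological algebras. Consequently, any topological $*$-isomorphism $\Phi: C^\infty(S^3\times S^6\times S^8)\rtimes_\alpha\mathbb{Z} \to C^\infty(S^3\times S^6\times S^8)\rtimes_\beta\mathbb{Z}$ would induce isomorphisms $E_\infty^n(\,\cdot\rtimes_\alpha\mathbb{Z}) \cong E_\infty^n(\,\cdot\rtimes_\beta\mathbb{Z})$ for every $n$.

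Next I would compare the two tables supplied by Propositions~\ref{6gra} and \ref{8gra}. For $\beta$ the nonzero grading degrees are $1,3,7,9$, while for $\alpha$ they are $1,3,9,11$. In particular,
\[
E_\infty^{7}(C^\infty(S^3\times S^6\times S^8)\rtimes_\beta\mathbb{Z}) \cong \mathbb{C}, \qquad E_\infty^{7}(C^\infty(S^3\times S^6\times S^8)\rtimes_\alpha\mathbb{Z}) \cong \{0\},
\]
so these groups cannot be isomorphic. (Equivalently, one can use degree~$11$, where the roles are reversed.) This single discrepancy in the grading structure of periodic cyclic cohomology rules out any isomorphism of the two smooth crossed product algebras.

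There is essentially no obstacle at this stage, since all of the work has been absorbed into the previous two propositions; the corollary is a formal consequence. The only point that deserves a brief mention in the write-up is that although $HP^{ev}$ and $HP^{od}$ happen to agree as ungraded vector spaces for the two algebras (both are $\mathbb{C}^{\oplus 4}$ in each parity), this coincidence does not produce an isomorphism because the finer filtration $S(HC^n)$ distinguishes them. Presenting the argument as a direct invocation of Propositions~\ref{6gra} and \ref{8gra}, together with the observation that the grading structure is preserved by algebra isomorphisms, completes the proof in a few lines.
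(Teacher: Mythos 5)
Your proposal is correct and follows exactly the paper's route: the paper derives the corollary directly from Propositions \ref{6gra} and \ref{8gra} by comparing the grading groups $E_\infty^n$, which differ in degrees $7$ and $11$. You merely spell out the (implicit) functoriality/invariance step, which is a reasonable addition but not a different argument.
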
 

\begin{rem}
As well known, one can not define proper grading structure for $K$-theory of operator algebras (see \cite{liu} for an example constructed by Elliott and Gong). Sometimes the absence of grading structure account for the reason why different diffeomorphisms can give the same $C^*$-algebras. In the meantime, smooth algebras are still different since their cyclic cohomology 	bear different grading structure. 
\end{rem}

\section{ the proof of Theorem \ref{cons}}

The next two propositions are obtained in \cite{fathi}. 

\begin{prop}[corollary 4.11, \cite{fathi} ]\label{pmin}
Let $M$ be a finite dimensional compact manifold, there exists a free $S^1$ action on $M$. Let $U$ be a non-empty open set of $M$. Then there exist a diffeomorphism $H$ on $M$, isotopic to identity, such that $H^{-1}(U)$ cut every orbit of $S^1$ action.
\end{prop}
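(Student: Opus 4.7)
The condition that $H^{-1}(U)$ cuts every $S^1$-orbit is equivalent to $\pi(H^{-1}(U)) = B$, where $\pi\colon M \to B := M/S^1$ is the principal $S^1$-bundle quotient (well-defined as a smooth manifold since the action is free and $M$ is compact). The plan is to construct $H$ by composing finitely many ambient isotopies, each supported in the preimage of a trivializing chart of $B$, enlarging the orbit-projection of the current preimage of $U$ at every step until it exhausts $B$.

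I would start by fixing $p \in U$ and a trivializing open coordinate ball $V_1 \subset B$ around $\pi(p)$, small enough that $U$ contains a product region $W_1 \times I_1$ in the local trivialization $\pi^{-1}(V_1) \cong V_1 \times S^1$, where $W_1 \subset V_1$ is a small ball and $I_1 \subset S^1$ is an open arc. Since $B$ is compact (and, without loss of generality, connected, treating components separately), I would cover $B$ by a finite chain of trivializing open balls $V_1, V_2, \dots, V_n$ with $V_{j+1} \cap (V_1 \cup \cdots \cup V_j) \neq \emptyset$. Inductively, assume $H_1, \dots, H_{j-1}$ have been constructed, each $H_i$ supported in $\pi^{-1}(V_i)$, with $\pi\bigl((H_{j-1}\circ\cdots\circ H_1)^{-1}(U)\bigr) \supset V_1 \cup \cdots \cup V_{j-1}$. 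Using the nonempty overlap of $V_j$ with the already-covered region, pick a product box $W \times I \subset V_j \times S^1$ contained in the current preimage of $U$. Then build $H_j$ supported in $\pi^{-1}(V_j)$ as the composition of a smooth sweep $(b, t) \mapsto (\phi^{(j)}_t(b), t)$, where $\{\phi^{(j)}_t\}_{t\in S^1}$ is a compactly supported loop of diffeomorphisms of $V_j$ chosen so that $\bigcup_{t \in I}\phi^{(j)}_t(W) = V_j$, together with a small fiberwise shear to recover the missing locus of the sweep; a direct computation then yields the extended invariant $\pi\bigl((H_j \circ \cdots \circ H_1)^{-1}(U)\bigr) \supset V_1 \cup \cdots \cup V_j$.

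The main difficulty is the bookkeeping required to maintain the invariant from step to step, and to check that each local piece is genuinely isotopic to identity. Because $H_j$ is the identity outside $\pi^{-1}(V_j)$, no orbit over $V_i \setminus V_j$ is affected for $i < j$, while orbits over $V_i \cap V_j$ are re-covered by the new construction over $V_j$; this is the crucial compatibility point, and I would verify it by choosing the support of $H_j$ so that the old product region over $V_{j-1}$ extends continuously into the new spiral over $V_j$. The isotopy-to-identity condition for each $H_j$ follows because $V_j$ is a disk, so a compactly supported loop of diffeomorphisms of $V_j$ is contractible by standard results on $\mathrm{Diff}_c(V_j)$. After $n$ steps, $H := H_n \circ \cdots \circ H_1$ is isotopic to the identity and satisfies $\pi(H^{-1}(U)) = B$, completing the proof.
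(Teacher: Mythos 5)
The paper does not prove this proposition at all: it is quoted verbatim as Corollary 4.11 of Fathi--Herman \cite{fathi}, so there is no in-paper argument to compare against. Your reduction to $\pi(H^{-1}(U)) = B$ for the quotient $\pi\colon M\to B:=M/S^1$ and the chart-by-chart sweeping strategy is the natural line of attack and is in the spirit of the source, but the inductive step as written does not close. A loop $\{\phi^{(j)}_t\}$ of compactly supported diffeomorphisms of the open ball $V_j$ can never satisfy $\bigcup_{t\in I}\phi^{(j)}_t(W)=V_j$: every point of $V_j$ outside the compact union of the supports is fixed by all $\phi^{(j)}_t$, hence lies in $\bigcup_t\phi^{(j)}_t(W)$ only if it already lies in $W$. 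So at best the sweep covers a compact subset $L_j\subset V_j$, and one must run the induction with a compact refinement of the cover. That is where the real problem appears: on the region inside the support of $H_j$ but outside $L_j$, the map $H_j$ is not the identity, so orbits over $V_i\cap V_j$ (for $i<j$) that were previously cut can be moved out of the current preimage of $U$ without being cut by the new sweep. Your one-line remedy, a ``small fiberwise shear to recover the missing locus,'' is never defined, and it is exactly on this frontier that the entire difficulty of the lemma is concentrated; the invariant $\pi\bigl((H_j\circ\cdots\circ H_1)^{-1}(U)\bigr)\supset V_1\cup\cdots\cup V_j$ is therefore not established.

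A secondary but non-cosmetic point: the justification of ``isotopic to the identity'' via contractibility of compactly supported loops in $\mathrm{Diff}_c(V_j)$ is false in general once $\dim V_j$ is large (the homotopy groups of $\mathrm{Diff}_c$ of a disc detect exotic spheres and pseudoisotopy phenomena), and here $\dim B=16$. Fortunately it is also unnecessary: a map of the form $(b,t)\mapsto(\phi_t(b),t)$, with $\phi$ a based loop that is constantly the identity outside an arc $I\subset S^1$, is isotopic to the identity simply by reparametrizing, e.g.\ $H^s(b,t)=(\phi_{r_s(t)}(b),t)$ where $r_s$ retracts $I$ onto its endpoint. To repair the main gap you would need either to arrange that the closed support of $H_j$ is contained in the union of the newly swept set and a region where the cutting of orbits is stable under $H_j$ (using that the preimage of $U$ is open, so cutting an orbit is a robust condition), or to replace the rigid union invariant by an argument that the open set of cut orbits in $B$ can always be strictly enlarged while retaining any prescribed compact subset, concluding by compactness of $B$. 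Either route requires genuine additional work, so the proposal is a plausible outline rather than a proof.
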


\begin{prop}[lemma 6.3, \cite{fathi}]\label{perg}
Let $M$ be a finite dimensional compact manifold, there is a free $S^1$ action on $M$. Let $U$ be a non-empty open set of $M$. let $\epsilon$ be a strictly positive real number. Then there is a diffeomorphism $H$ on $M$, isotopy to identity, such that for any point $y\in M $, we have 
\[m\{t\in S^1|\mathcal{R}_t (y) \notin H^{-1}(U) \}<\epsilon,\]
where $m$ is the Lesbegue measure on $S^1$. 
\end{prop}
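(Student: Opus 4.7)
The plan is to construct $H$ in two stages: first invoke Proposition \ref{pmin} to obtain $H_0$ isotopic to the identity such that $V := H_0^{-1}(U)$ meets every $S^1$-orbit; then build a fiber-preserving diffeomorphism $K$ of $M$, isotopic to the identity, satisfying
\[
m\bigl\{\, t \in S^1 : \mathcal{R}_t(y) \in K^{-1}(V) \,\bigr\} > 1 - \epsilon \quad \text{for every } y \in M.
\]
Setting $H := H_0 \circ K$ then yields $H^{-1}(U) = K^{-1}(V)$, and $H$ is isotopic to the identity as a composition of two such diffeomorphisms.

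For stage one, openness of $V$ makes the fiberwise measure $\lambda(y) := m\{ t \in S^1 : \mathcal{R}_t(y) \in V\}$ lower semicontinuous in $y$, and it is strictly positive because $V$ meets every orbit; compactness of $M$ then yields a uniform bound $\lambda(y) \geq \delta_0 > 0$. For stage two, freeness of the $\mathcal{R}$-action on compact $M$ makes $\pi : M \to B := M/S^1$ a smooth principal $S^1$-bundle over a compact base. I would realize $K$ as the time-$1$ flow of a smooth fiber-tangent vector field $X := f \cdot \xi$, where $\xi$ is the infinitesimal generator of $\mathcal{R}$ and $f \in C^\infty(M)$ is to be chosen; any such flow is automatically fiber-preserving and isotopic to the identity through its intermediate time-$s$ flows. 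It remains to pick $f$ so that the induced diffeomorphism of each fiber $\mathcal{O}_y \cong S^1$ stretches $V \cap \mathcal{O}_y$, of measure $\geq \delta_0$, to measure $> 1 - \epsilon$. This is possible fiber-by-fiber, because any open subset of $S^1$ of positive Lebesgue measure can be stretched to arbitrarily large measure by an orientation-preserving diffeomorphism of $S^1$ isotopic to the identity; indeed it suffices to compress the complementary closed set into a short arc.

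The main obstacle is the global coherence of these fiberwise stretchings. To set it up, fix a finite trivializing cover $\{B_\alpha\}$ of $B$ with subordinate smooth partition of unity $\{\rho_\alpha\}$; over each trivialization $\pi^{-1}(B_\alpha) \cong B_\alpha \times S^1$ choose $f_\alpha \in C^\infty(\pi^{-1}(B_\alpha))$ so that its fiberwise time-$1$ flow realizes the stretching described above, and set $f := \sum_\alpha \rho_\alpha f_\alpha$. Because the principal $S^1$-bundle has $S^1$-valued transition functions that twist the fiber coordinate between trivializations, the blended $f$ does not a priori produce a uniformly controlled stretching on every fiber. The resolution is quantitative: refine the cover so that the modulus of continuity of the slice $y \mapsto V \cap \mathcal{O}_y$ is small within each $B_\alpha$, choose the $f_\alpha$ with uniform Lipschitz control in the base direction, and use compactness of $B$ together with the lower semicontinuity of $\lambda$ to show that the blended flow $K$ still satisfies $m(K^{-1}(V) \cap \mathcal{O}_y) > 1 - \epsilon$ uniformly in $y$, completing the construction.
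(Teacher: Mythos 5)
This proposition is quoted in the paper as Lemma~6.3 of Fathi--Herman \cite{fathi} and no proof is given there, so your attempt cannot be matched against an argument in the text; it has to stand on its own. The overall architecture is reasonable and in the spirit of the source: reduce to a fiber-preserving diffeomorphism of the principal $S^1$-bundle $\pi\colon M\to M/S^1$ that stretches, in every fiber, an open set meeting every orbit to a set of measure $>1-\epsilon$. Stage one is fine: lower semicontinuity of $\lambda(y)=m\{t:\mathcal{R}_t(y)\in V\}$, positivity on every orbit, and compactness do give a uniform $\delta_0>0$. (One quibble: you need $m\bigl((K|_{\mathcal{O}_y})^{-1}(V_y)\bigr)>1-\epsilon$, i.e.\ the \emph{inverse} fiber maps must compress the complement of $V_y$; you gloss over this but it only amounts to a sign choice.)

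The genuine gap is the globalization step, which you yourself identify as ``the main obstacle'' and then do not resolve. First, the quantity you propose to control, ``the modulus of continuity of the slice $y\mapsto V\cap\mathcal{O}_y$,'' does not exist for a general open set $V$: the slices vary only lower semicontinuously (components can shrink to nothing or appear abruptly), so no refinement of the cover makes them uniformly continuous. Second, and more seriously, the partition-of-unity blend $f=\sum_\alpha\rho_\alpha f_\alpha$ does not inherit the fiberwise stretching property: each $f_\alpha$ compresses the complement of $V_y$ toward some target region of $V_y$ chosen locally over $B_\alpha$, and on overlaps two such fields can point in opposite directions along the fiber and cancel, leaving the averaged flow unable to move a large-measure set into $V_y$. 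The natural repair --- choosing the compression target $p(y)\in V\cap\mathcal{O}_y$ coherently for all $y$ --- would require a continuous section of the $S^1$-bundle avoiding $M\setminus V$, and such a section need not exist (the bundle may admit no global section at all, as for $S^3\to S^2$). Replacing the average by a composition of locally supported fiber-shears does not obviously help either, since later factors can push points back out of $V$. This local-to-global difficulty is precisely the nontrivial content of Fathi--Herman's lemma, and your sketch does not contain an idea that overcomes it.
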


Now we are ready to prove Theorem \ref{cons}. It is sufficient to prove that there are minimal unique ergodic diffeomorphisms in $\overline{\Omega(\mathcal{R}, \mathcal{P}_6)}$. The $S^1$ free action, $\mathcal{R}$ and $\mathcal{P}_6$ are the one given in the first section. We prove two key lemmas first. In the following, we may represent an element in $S^1$ by its angle $\theta\in (0,1]$.

\begin{lem}\label{lmin}
Given $\frac{p}{q} \in \mathbb{Q}/\mathbb{Z}$, $U$ a non-empty open set in $S^3 \times S^6 \times S^8$. Then there is a diffeomorphism $H$ of $M$ such that:\\
$\mathnormal{1)}$ $H\circ \mathcal{R}_{\frac{p}{q}}\circ H^{-1}={R}_{\frac{p}{q}}, H\circ \mathcal{P}_6 \circ H^{-1}=\mathcal{P}_6$,\\
$\mathnormal{2)}$ For any $y\in S^3 \times S^6 \times S^8$, $H^{-1}(U)$ cut either the orbit under the free $S^1$ action of $y$ or the one of $\mathcal{P}_6(y)$.
\end{lem}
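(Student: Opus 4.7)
The plan is to build $H$ as the lift, to $M = S^3\times S^6\times S^8$, of a suitable diffeomorphism on the quotient $N=M/G$ where $G=\langle \mathcal{R}_{p/q},\mathcal{P}_6\rangle$, and to deduce condition (2) from Proposition \ref{pmin} applied on $N$.

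First I would verify that $G$ is a finite abelian group acting freely on $M$. Since $\mathcal{R}$ acts only on the $S^3$-factor and $\mathcal{P}_6$ only on the $S^6$-factor, the two generators commute, so $G\cong \mathbb{Z}_{q'}\times\mathbb{Z}_2$, where $q'$ is the order of $\mathcal{R}_{p/q}$ in $S^1$. The $\mathcal{R}$-part is free because $\mathcal{R}$ is a free $S^1$-action, and $\mathcal{P}_6$ is the antipode on $S^6$, hence fixed-point free; combining these shows every non-identity element of $G$ is fixed-point free on $M$. Therefore $N:=M/G$ is a closed smooth manifold and $\pi\colon M\to N$ is a finite regular covering. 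Moreover $\mathcal{R}$ descends to a free $S^1$-action $\bar{\mathcal{R}}$ on $N$, because $\mathcal{R}_{p/q}$ lies in the abelian $S^1$ and $\mathcal{P}_6$ commutes with every $\mathcal{R}_t$; the quotient circle $S^1/\langle \mathcal{R}_{p/q}\rangle$ is again an $S^1$.

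Next I would apply Proposition \ref{pmin} to $N$ with the free $\bar{\mathcal{R}}$-action and the non-empty open set $\pi(U)$. This produces a diffeomorphism $H_N$ of $N$, isotopic to $\mathrm{id}_N$ via some isotopy $\{H_{N,s}\}_{s\in[0,1]}$, with $H_N^{-1}(\pi(U))$ meeting every $\bar{\mathcal{R}}$-orbit. Since $\pi$ is a covering, I can lift this isotopy uniquely to an isotopy $\{H_s\}$ on $M$ with $H_0=\mathrm{id}_M$, and set $H:=H_1$. For each $g\in G$, the two paths $s\mapsto H_s$ and $s\mapsto g\, H_s\, g^{-1}$ are both lifts of $\{H_{N,s}\}$ and agree at $s=0$, so by uniqueness of lifts they agree for all $s$. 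Thus $H$ commutes with every element of $G$, which is exactly condition (1).

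For condition (2), fix $y\in M$ and set $\bar y=\pi(y)$. By the choice of $H_N$ there exists $\bar t\in S^1$ with $H_N(\bar{\mathcal{R}}_{\bar t}(\bar y))\in \pi(U)$; lifting $\bar t$ to $t\in S^1$ and using $\pi\circ H=H_N\circ \pi$ gives $\pi(H\mathcal{R}_t(y))\in \pi(U)$, so some $G$-translate of $H\mathcal{R}_t(y)$ lies in $U$. Using that $H$ is $G$-equivariant and that $\mathcal{R}$ and $\mathcal{P}_6$ commute, this translates to $H(\mathcal{R}_{t'}\mathcal{P}_6^{i}(y))\in U$ for some $t'\in S^1$ and $i\in\{0,1\}$, i.e.\ $\mathcal{R}_{t'}(\mathcal{P}_6^{i}(y))\in H^{-1}(U)$. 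When $i=0$ the $S^1$-orbit of $y$ meets $H^{-1}(U)$, and when $i=1$ the $S^1$-orbit of $\mathcal{P}_6(y)$ does; either way (2) holds. The only steps requiring care are the freeness of the $G$-action and the $G$-equivariance of the lift $H$ (via uniqueness of path-lifts in a regular covering); neither should be a real obstacle, and the real content is simply pulling Proposition \ref{pmin} up through the quotient map $\pi$.
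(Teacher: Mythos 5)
Your proposal is correct and follows essentially the same route as the paper: quotient $M$ by the finite group generated by $\mathcal{R}_{p/q}$ and $\mathcal{P}_6$, apply Proposition \ref{pmin} on the quotient, and lift the resulting diffeomorphism back to $M$. Your treatment is in fact more careful than the paper's at the one delicate point --- you lift the isotopy from the identity to guarantee the lift $H$ is $G$-equivariant, where the paper only says ``choose the lift which keeps the orientation.''
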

\begin{proof}
	Let $G=\{i\frac{p}{q}, i=1,\dots,q\}$ be the subgroup of $S^1$ generated by $\exp{2\pi i\frac{p}{q}}$, $B={I, \mathcal{P}_6}$ an order 2 group acting on $S^3 \times S^6 \times S^8$. Consider the manifold defined by $\pi: S^3 \times S^6 \times S^8 \rightarrow S^3 \times S^6 \times S^8 / (G\times B) $. As implied by Proposition \ref{pmin} , there is a diffeomorphism $\overline{H}: S^3 \times S^6 \times S^8 / (G\times B) \to S^3 \times S^6 \times S^8 / (G\times B) $, such that $\overline{H}^{-1}(\pi U)$ cut any orbit of $S^1$ on 
	$S^3 \times S^6 \times S^8 / (G\times B) = S^3/G \times \mathbb{R P}_6 \times S^8$. Choose the the lift $H$ of $\overline{H}$ which keep the orientation. One can see $H$ satisfies  conditions $\mathnormal{1)}$ and $\mathnormal{2)}$.
	\end{proof}

\begin{lem}\label{lerg}
Given $\phi \in C(S^3 \times S^6 \times S^8)$, $\eta >0$ and $\frac{p}{q} \in \mathbb{Q}/\mathbb{Z}$, we can choose a diffeomorphism and constant $c\in \mathbb{R}$ such that:\\
$\mathnormal{1)}$ $H\circ \mathcal{R}_{\frac{p}{q}}\circ H^{-1}={R}_{\frac{p}{q}}, H\circ \mathcal{P}_6 \circ H^{-1}=\mathcal{P}_6$,\\
$\mathnormal{2)}$ for all $y\in M$, $|\int_{S^1} (\phi\circ H \circ \mathcal{R}_t(y)+ \phi\circ H \circ \mathcal{R}_t \circ \mathcal{P}_6(y) )dt -c|< \eta$.
\end{lem}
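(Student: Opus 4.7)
The plan is to adapt the quotient-space construction of Lemma \ref{lmin} so that Proposition \ref{perg} can be invoked on the quotient, combined with an approximation of $\phi$ by indicators of open sets. Let $G = \langle\mathcal{R}_{p/q}\rangle \subset S^1$ (assuming $\gcd(p,q)=1$) and $B = \{I, \mathcal{P}_6\}$, and pass to the quotient $N = (S^3\times S^6\times S^8)/(G\times B)$, a compact manifold carrying a free $S^1$-action $\bar{\mathcal{R}}$ inherited from $\mathcal{R}$. A diffeomorphism $H$ commuting with both $\mathcal{R}_{p/q}$ and $\mathcal{P}_6$ automatically satisfies condition $\mathnormal{1)}$ and descends to $\bar H \in D(N)$; conversely, any such $\bar H$ isotopic to the identity lifts to an orientation-preserving $(G\times B)$-equivariant $H$. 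Because $\mathcal{R}_t$ and $\mathcal{P}_6$ act on disjoint factors, they commute, so the integrand in $\mathnormal{2)}$ is $\Psi(H\mathcal{R}_t(y))$ with $\Psi = \phi + \phi\circ \mathcal{P}_6$, which is $B$-invariant. A change of variables $t\mapsto t+i/q$ in the $S^1$-integral shows that $\int_{S^1}\Psi(H\mathcal{R}_t(y))\,dt = \int_{S^1}\Phi(H\mathcal{R}_t(y))\,dt$, where $\Phi = \frac{1}{q}\sum_{i=0}^{q-1}\Psi\circ\mathcal{R}_{i/q}$ is both $G$- and $B$-invariant and hence descends to some $\bar\Phi\in C(N)$. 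The task is thereby reduced to producing $\bar H\in D(N)$ isotopic to the identity with $|\int_{S^1}\bar\Phi(\bar H\bar{\mathcal{R}}_t(\bar y))\,dt - c| < \eta$ uniformly in $\bar y$.

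To bring Proposition \ref{perg} to bear on this continuous problem, fix $\delta$ much smaller than $\eta$ and partition $N$, up to a negligible boundary, into open sets $V_1,\dots,V_K$ on which $\bar\Phi$ oscillates by less than $\delta$; pick values $c_k\in\bar\Phi(V_k)$ and prescribe positive numbers $v_k$ with $\sum_k v_k = 1$ (the natural choice being the normalized $\bar{\mathcal{R}}$-invariant volumes of the $V_k$). If one can produce a single $\bar H$, isotopic to the identity, such that for every $\bar y\in N$ and every $k$ the hitting time $m(\{t\in S^1 : \bar H\bar{\mathcal{R}}_t(\bar y)\in V_k\})$ lies within $\delta/K$ of $v_k$, then the integral above equals $\sum_k c_k v_k + O(\delta)$ uniformly in $\bar y$, which yields the lemma with $c = \sum_k c_k v_k$.

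The hard part is producing such a $\bar H$, because Proposition \ref{perg} only handles a single open set and only forces its hitting time close to $1$, rather than to a prescribed fraction $v_k$. My plan is to refine Fathi's construction as follows: choose a fine partition $I_1,\dots,I_L$ of $S^1$ into intervals together with a label function $\ell\mapsto k(\ell)\in\{1,\dots,K\}$ distributed so that the proportion of $\ell$'s assigned to each $k$ is $v_k$; then, on each $\bar{\mathcal{R}}$-tube over $I_\ell$, apply the construction from the proof of Proposition \ref{perg} using a perturbation supported there to force the orbit segment indexed by $I_\ell$ into $V_{k(\ell)}$. Since these perturbations have pairwise disjoint $S^1$-supports, their composition remains isotopic to the identity and the hitting-time estimates add up to the desired simultaneous control. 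Lifting the resulting $\bar H$ back to the $(G\times B)$-equivariant, orientation-preserving diffeomorphism $H$ on $M$ then completes the argument.
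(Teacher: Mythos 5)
Your first half---passing to the quotient $N=(S^3\times S^6\times S^8)/(G\times B)$, noting that condition $\mathnormal{1)}$ is what lets the integrand be rewritten as $\Psi(H\mathcal{R}_t(y))$ with $\Psi=\phi+\phi\circ\mathcal{P}_6$, and averaging over $G$ to get a function $\bar\Phi$ on $N$---is exactly the paper's first step and is correct. The gap is in the second half. What you need there is a single $\bar H$, isotopic to the identity, such that for \emph{every} $\bar y\in N$ and every cell $V_k$ of a partition the hitting time $m\{t:\bar H\bar{\mathcal{R}}_t(\bar y)\in V_k\}$ is within $\delta/K$ of a prescribed $v_k$. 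This simultaneous equidistribution statement is strictly stronger than Proposition \ref{perg}, and the mechanism you sketch for it does not work as described: a perturbation supported in the tube over $I_\ell$ (identity outside it) maps that tube onto itself, so it cannot ``force the orbit segment indexed by $I_\ell$ into $V_{k(\ell)}$'' unless the tube already sits inside $V_{k(\ell)}$; moreover the $S^1$-bundle here need not be trivial (the $S^3$ factor carries the Hopf fibration), so global tubes over intervals of $S^1$ are not even available. Making your partition-rearrangement idea rigorous would be an Anosov--Katok-type construction requiring its own proof, and you have correctly flagged it as ``the hard part'' without supplying it.

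The difficulty is self-inflicted: the lemma lets you \emph{choose} $c$, so you do not need the orbit integrals to approximate the space average of $\bar\Phi$---you only need them to be nearly independent of $\bar y$. Pick any value $c$ attained by $\bar\Phi$ and set $U=\{\bar x\in N:|\bar\Phi(\bar x)-c|<\epsilon\}$, a non-empty open set. A single application of Proposition \ref{perg} to this one set gives $\bar H$, isotopic to the identity, with $m\{t:\bar H\bar{\mathcal{R}}_t(\bar y)\notin U\}<\epsilon$ for all $\bar y$, whence
\[\Bigl|\int_{S^1}\bar\Phi\bigl(\bar H\bar{\mathcal{R}}_t(\bar y)\bigr)\,dt-c\Bigr|<\epsilon+2\epsilon\|\bar\Phi\|_\infty<\eta\]
uniformly in $\bar y$. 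This is the paper's argument (written there with $c=0$ and the sublevel set $\{|\overline\phi|<\epsilon\}$, i.e.\ with $\phi$ implicitly normalized); lifting $\bar H$ to an orientation-preserving $(G\times B)$-equivariant $H$, exactly as in your Lemma \ref{lmin} step, then finishes the proof with no partition and no multi-set control.
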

\begin{proof}
	Consider $\pi: S^3 \times S^6 \times S^8 \rightarrow S^3 \times S^6 \times S^8 / G\times B $ as we have done. $\mathcal{R}_t$ naturally defines a diffeomorphism $\overline{\mathcal{R}}_t$ of $S^3 \times S^6 \times S^8 / (G\times B) $ from . For $\phi$, define a function $\overline{\phi} =\frac{1}{q} \sum^{q-1}_{i=0}( \phi\circ \mathcal{R}_{\frac{i}{q}} + \phi \circ \mathcal{P}_6 \circ\mathcal{R}_{\frac{i}{q}})$. For any $x\in S^3 \times S^6 \times S^8$ and sufficiently small $\epsilon$, consider the open set 
	\[U=\{\overline{x}\in S^3 \times S^6 \times S^8 / (G\times B), |\overline{\phi}(\overline{x})|<\epsilon\}.\]
	By Proposition \ref{perg} , there is a diffeomorphism $\overline{H}$ such that 
	\[m\{\theta\in S^1|\mathcal{R}_t \pi(x) \notin \overline{H}^{-1}(U) \}<\epsilon,\] 
	thus we have
	\[|\int_{S^1} \overline{\phi}\circ \overline{H}\circ \overline{\mathcal{R}_t}(\overline{x}) dt|<\epsilon+\epsilon\|\phi\|<\eta.\]
	$\phi$ is the lift of $\overline{\phi}$. $\phi$ satisfies condition $\mathnormal{1)}$ and $\mathnormal{2)}$. 
\end{proof}

As shown in Proposition \ref{mind} and \ref{ergd}, the set of minimal unique ergodic diffeomorphisms $\mathcal{O}$ in $\overline{\Omega(\mathcal{R},\mathcal{P}_6)}$ equals to $ \cap_i \iota(U_i)\cap (\cap_i \cap_{k\geq1} \iota(\phi_i, 1/k )) \cap \overline{\Omega(\mathcal{R},\mathcal{P}_6)}$. We will prove that this $G_\delta$ set is dense in $\overline{\Omega(\mathcal{R},\mathcal{P}_6)}$. It would be sufficient to prove that $\mathcal{R}_{\frac{p}{q}}$ belongs to $\overline{\iota(U_i)}$ and $\overline{\iota(\phi_l,1/k)}$ for any $i$, $l$ and $k$.\\
 $\mathnormal{1)}$ For any $U$, $H$ is a diffeomorphim in Lemma \ref{lmin} . For any $x$, $U$ cut either the set
\[H\circ\mathcal{R}_{S^1}\circ H^{-1} (x)\triangleq \{H\circ\mathcal{R}_t\circ H^{-1} (x), t\in [0,1)\}\]
  or 
\[H\circ\mathcal{R}_{S^1}\circ \mathcal{P}_6 \circ H^{-1} (x)\triangleq \{H\circ\mathcal{R}_t\circ\mathcal{P}_6\circ H^{-1} (x), t\in [0,1)\}.\]
For any irrational number $\theta\in [0,1)$, both sets 
$\{H\circ \mathcal{R}_{2n\theta} \circ H^{-1}(x)\}_{n\in \mathbb{Z}}$ and 
$\{H\circ \mathcal{R}_{(2n+1)\theta} \circ \mathcal{P}_6 \circ H^{-1}(x)\}_{n\in \mathbb{Z}}$ are dense in 
$H\circ\mathcal{R}_{S^1}\circ H^{-1} (x)$ and 
$H\circ\mathcal{R}_{S^1}\circ \mathcal{P}_6 \circ H^{-1} (x)$ respectively. Thus 
$H\circ \mathcal{R}_\theta \circ \mathcal{P}_6 \circ H^{-1}$ belongs to $\iota(U)$ for any irrational number $\theta\in [0,1)$. Since any $\frac{p}{q}$ can be approximated by a sequence of irrational numbers $\theta_n$, there is $H\circ \mathcal{R}_{\theta_n} \circ \mathcal{P}_6 \circ H^{-1}\to H\circ \mathcal{R}_{\frac{p}{q}} \circ \mathcal{P}_6 \circ H^{-1}=\mathcal{R}_{\frac{p}{q}} \circ \mathcal{P}_6$. Thus $\mathcal{R}_{\frac{p}{q}}$ belongs to $\overline{\iota(U)}$.\\
 $\mathnormal{2)}$ Given $\phi$ and $\epsilon>0$, $H$ is a diffeomorphism in Lemma \ref{lerg}. Since irrational rotation transformation is unique ergodic on $S^1$, for any $x$ and any irrational number $\theta$, there is
 \[\lim_{n\to\infty}\frac{1}{n}\sum^{n-1}_{i=0}\phi\circ \mathcal{R}_{i\theta}(x)=\int_{S^1} \phi\circ \mathcal{R}_t(x) dt.\]
 Thus the following holds 
  \[\lim_{n\to\infty}\frac{1}{n}\sum^{n-1}_{i=0}\phi\circ \mathcal{R}_{2i\theta}(x)=\int_{S^1} \phi\circ \mathcal{R}_t(x) dt.\]
\begin{eqnarray*}
\lim_{n\to\infty}\frac{1}{n}\sum^{n-1}_{i=0}\phi\circ \mathcal{R}_{(2i+1)\theta}\circ \mathcal{P}_6 (x)&=&\int_{S^1} \phi\circ \mathcal{R}_\theta \mathcal{R}_t\circ \mathcal{P}_6(x) dt\\
&=& \int_{S^1} \phi\circ \mathcal{R}_t\circ \mathcal{P}_6(x) dt.
\end{eqnarray*}
Hence $H\circ \mathcal{R}_{\theta} \circ \mathcal{P}_6 \circ H^{-1} $ belongs to $\iota(\phi, \epsilon)$. Find a sequence of irrational number $\theta_n$ converge to $\frac{p}{q}$. Since  $H\circ \mathcal{R}_{\theta_n} \circ \mathcal{P}_6 \circ H^{-1}\to H\circ \mathcal{R}_{\frac{p}{q}} \circ \mathcal{P}_6 \circ H^{-1}=\mathcal{R}_{\frac{p}{q}} \circ \mathcal{P}_6$, we have $\mathcal{R}_{\frac{p}{q}}\subset \overline{\iota(\phi, \epsilon )}$.

Now we have completed the proof of Theorem \ref{cons}.

\textit{Acknowledgement}


\begin{thebibliography}{999}
\bibitem{Al1} A. Connes, Non-commutative differential geometry. Publications Mathematiques de l'IHES. 62 (1) (1985), 41-144.

\bibitem{Al2} A. Connes, \textit{Non-commutative Geometry}. Springer US, 1988.

\bibitem{AL3} A. Connes, An analogue of the Thom isomorphism for crossed products of a $C^*$ algebra by an action of $\mathbb{R}$. Advances in mathematics. 39 (1) (1981), 31-55.

\bibitem{fathi} A. Fathi and M. R. Herman,  Existence de diffÃ©omorphismes minimaux. AstÃ©risque. 49  (1977), 37-59.

\bibitem{fur} H. Furstenberg, Strict ergodicity and transformation of the torus, American Journal of Mathematics, 83 (4) (1961), 573-601.

\bibitem{gps} T. Giordano, I. F. Putnam and C. F. Skau, Topological orbit equivalence and $C^*$-crossed product. J. reine angew. Math.  469 (1995), 51-111.

\bibitem{ch3} Q. Lin and N. C. Phillips, The structure of $C^*$-algebras of minimal diffeomorphisms. preprint.

\bibitem{ch4} Q. Lin and N. C. Phillips, Direct limit decomposition for $C^*$-algebras of minimal diffeomorphisms. arXiv preprint math/0208086.

\bibitem{liu} H. Z. Liu, Smooth crossed products induced by minimal unique ergodic diffeomorphisms on odd spheres. arXiv preprint arXiv:1604.01758, 2016.

\bibitem{nest} R. Nest, Cyclic cohomology of crossed products with Z. Journal of functional analysis. 80 (2) (1988), 235-283.

\bibitem{nestt} R. Nest, Cyclic cohomology of crossed products with Z, Journal of functional analysis, 80 (2) (1988), 235-283.

\bibitem{ch5} N. C. Phillips, Cancellation and the stable rank for the direct limits of recursive sub homogeneous algebras. Transactions of the American Mathematical Society. 359 (10) (2007), 4625-4652.

\bibitem{chis} N. C. Phillips, Examples of different minimal diffeomorphisms giving the same C*-algebras. Israel Journal of Mathematics 160.1 (2007): 189-217.

\bibitem{ch1} N. C. Phillips, When are crossed products by minimal diffeomorphisms isomorphic?. arXiv preprint math/0208087.

\bibitem{pv} M. Pimsner and D. Voiculescu, Imbedding the irrational rotation $C^*$ algebra into an AF algebra. J. of operator theory. 4 (1980), 201-211.

\bibitem{rie} M. A. Rieffel, $C^*$-algebras associated with irrational rotations. Pacific J. Math. 93 (1981), 415-429. 

\bibitem{sch} L. B. Schweitzer, Dense m-convex Frechet subalgefbras of operator algebra crossed products by Lie groups. International Journal of Mathematics.  4 (1993), 601-673.

\bibitem{win} A. S. Toms and W. Winter, Minimal dynamics and K-theoretic rigidity: Elliott's conjecture. Geometric and Functional Analysis. 23 (1) (2013), 467-481.

\bibitem{wind} A. Windsor, Minimal but not uniquely ergodic diffeomorphisms. In \textit{Smooth ergodic theory and its applications}, (Seattle, WA, 1999), Proc. Sympos. Pure Math. 69 (2001). pages 809-824. 

\end{thebibliography}
\end{document}